\documentclass[a4paper,leqno]{article}

\usepackage[english]{babel}
\usepackage[T1]{fontenc}
\usepackage[utf8]{inputenc}
\usepackage{amsthm,amssymb,amsmath,amsopn,amsfonts}
\usepackage{enumerate}
\usepackage{csquotes} % Necessary for package biblatex
\usepackage{mathtools}

% Bibliography package and configuration
\usepackage[backend=biber,
            style=alphabetic,
            giveninits=true,
            isbn=false,
            doi=false,
            url=false,
            sorting=nyvt]{biblatex}
\addbibresource{bibliography.bib}
\renewbibmacro{in:}{}
\DeclareBibliographyDriver{book}{ % Redefine book style to exclude number of pages
  \printnames{author}
  \newunit\newblock
  \printfield{title}
  \newunit\newblock
  \printlist{publisher}
  \newunit
  \printfield{year}
  \finentry
  }

\newtheorem{thm}{Theorem} % Numbered theorem
\newtheorem{thmA}{Theorem} % Numbered theorem, letters counter
\newtheorem{lemma}{Lemma}
\newtheorem{corl}{Corollary}

 % Redefine \thmA counter to use uppercase letters

\renewcommand{\Im}{\operatorname{Im}}
\newcommand{\disk}{\mathbb{D}}
\newcommand{\ddisk}{\partial\mathbb{D}}
\newcommand{\cldisk}{\overline{\mathbb{D}}}
\newcommand{\reals}{\mathbb{R}}
\newcommand{\halfpl}{\mathbb{H}}
\newcommand{\dist}{\operatorname{dist}}

\title{Distortion and Distribution of Sets under Inner Functions}
\author{
        Matteo Levi\thanks{The first author is partially supported by the 2015 PRIN grant \textit{Real and Complex Manifolds: Geometry, Topology and Harmonic Analysis} of the Italian Ministry of Education (MIUR).},
        Artur Nicolau
        and Odí Soler i Gibert\thanks{The three authors are supported in part by the Generalitat de Catalunya (grant 2017 SGR 395) and the Spanish Ministerio de Ciencia e Innovación (projects MTM2014-51824-P, MTM2017-85666-P).}}
\date{}

\newcommand{\Addresses}{{% additional braces for segregating \footnotesize
  \bigskip
  \footnotesize
  
  Matteo~Levi: \textsc{Università di Bologna, Dipartimento di Matematica, Via Zamboni 33, 40126 Bologna, Italia}\par\nopagebreak
  \textit{E-mail address}: \texttt{matteo.levi2@unibo.it}
  
  \medskip

  Artur~Nicolau: \textsc{Universitat Autònoma De Barcelona, Departament de Matemàtiques, Edifici C, 08193-Bellaterra, Catalunya}\par\nopagebreak
  \textit{E-mail address}: \texttt{artur@mat.uab.cat}

  \medskip

  Odí~Soler~i~Gibert: \textsc{Universitat Autònoma De Barcelona, Departament de Matemàtiques, Edifici C, 08193-Bellaterra, Catalunya}\par\nopagebreak
  \textit{E-mail address}: \texttt{odisoler@mat.uab.cat}

}}

\begin{document}

    \maketitle
    
    \begin{abstract}
        It is a classical result that Lebesgue measure on the unit circle is invariant under inner functions fixing the origin.
        In this setting, the distortion of Hausdorff contents has also been studied.
        We present here similar results focusing on inner functions with fixed points on the unit circle.
        In particular, our results yield information not only on the size of preimages of sets under inner functions, but also on their distribution with respect to a given boundary point.
        As an application, we use them to estimate the size of irregular points of inner functions omitting large sets.
        Finally, we also present a natural interpretation of the results in the upper half plane.
    \end{abstract}
    \textbf{\textit{Keywords---}} Inner functions, boundary fixed points, angular derivatives, Hausdorff contents.

    \section{Introduction}

    Let $\disk$ be the open unit disc of the complex plane.
    An analytic mapping $f\colon \disk \rightarrow \disk$ is called \emph{inner} if $ \left|\lim_{r \to 1} f(r\xi)\right| = 1$ for almost every point (a.e.)  $\xi$ of the unit circle $\ddisk$. 
    Hence, an inner function $f$ induces a map defined at almost every point $\xi \in \ddisk$ by $f^\ast(\xi)= \lim_{r \to 1} f(r\xi)$, which we will denote by $f$ as well.
    This induced map lacks the regularity of the inner function itself and it is actually discontinuous at every point $\xi \in \ddisk$ where $f$ does not extend analytically.
    More concretely, fixed $\xi \in \ddisk$ where $f$ does not extend analytically and $\eta \in \ddisk$ there exists a sequence $\xi_n\to \xi$ such that $f(\xi_n) \to \eta$ (see page 77 of \cite{ref:GarnettBoundedAnalyticFunctions}, and page 4 of \cite{ref:NoshiroClusterSets}).
    We are interested in studying certain invariance and distortion properties of measures and Hausdorff contents of sets in the unit circle under the action of inner functions.
  
    Let $f\colon \disk \rightarrow \disk$ be an analytic mapping. We say that a point $p \in \ddisk$ is a \emph{boundary Fatou point} of $f$ if $f(p)=\lim_{r \to 1} f(rp)$ exists and $f(p) \in \ddisk$.
    Hence, the set of boundary Fatou points of an inner function has full measure.
    For $0< \beta < 1$ and $p \in \ddisk$, let $\Gamma_{\beta} (p) = \{z \in \disk : |z-p| < \beta (1-|z|) \}$ be the Stolz angle with opening $\beta$ and vertex at $p$.
    A holomorphic self map $f$ of the unit disc has finite angular derivative at $p \in \ddisk$ if there is a point $\eta \in \ddisk$ and $\beta > 0$ such that the non-tangential limit
    \begin{equation*}
        f'(p) \coloneqq \lim_{\Gamma_{\beta} (p) \ni z \to p} \frac{\eta-f(z)}{p-z}
    \end{equation*}
    exists and is finite. Observe that in this case $ \eta = f(p).$
    We set $|f'(p)|=+\infty$ if the function $f$ does not have a finite angular derivative at the point $p \in \ddisk$. Observe that this is the case if $p$ is not a boundary Fatou point of $f$. With this convention, for any $p \in \ddisk$,  the classical Julia-Carath\'eodory theorem gives  
    \begin{equation}
        \label{eq:ModulusDerivative}
        \liminf_{z \to p} \frac{1-|f(z)|}{1-|z|} = |f'(p)| > 0, 
    \end{equation}
    in the sense that either the $\liminf$ is finite and equal to $|f'(p)|>0$ or both quantities are infinite. See for example Chapters IV and V of \cite{ref:ShapiroCompositionOperators}.

    We denote by $\lambda$ the normalized Lebesgue measure on $\ddisk$ and by $\lambda_z$ the harmonic measure from the point $z \in \disk,$ given by
    \begin{equation*}
        \lambda_z(E) = \int_E \frac{1-|z|^2}{|\xi - z|^2} \, d\lambda(\xi),
    \end{equation*}
    for any measurable set $E \subseteq \ddisk$.
    A classical result due to L\"owner (see, for instance, page 12 of \cite{ref:Ahlfors}) says that Lebesgue measure is invariant under the action of any inner function fixing the origin.
    Hence, the following conformally invariant version of L\"owner's Lemma holds.
    \begin{thmA}
        \label{thm:LownerLemma}
        Let $f:\disk \to \disk$ be an inner function and  $z \in \disk.$ Then,
        \begin{equation*}
            \lambda_z(f^{-1}(E)) = \lambda_{f(z)}(E)
        \end{equation*}
        for any measurable set $E \subseteq \ddisk.$ 
    \end{thmA}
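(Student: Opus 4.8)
The plan is to reduce the general statement to Löwner's classical result, which handles the case $z=0$ and $f(0)=0$, by pre- and post-composing $f$ with automorphisms of the disc. Fix $z\in\disk$ and write $w=f(z)$. Let $\varphi_z$ be the Möbius automorphism of $\disk$ with $\varphi_z(0)=z$, and let $\varphi_w$ be the one with $\varphi_w(0)=w$; both are their own compositional inverses up to the obvious normalization, but what matters is only that they are conformal self-maps of $\disk$ carrying $0$ to the prescribed point. Consider $g=\varphi_w^{-1}\circ f\circ\varphi_z$. Then $g$ is again an inner function (composition of an inner function with disc automorphisms is inner, since automorphisms extend homeomorphically to $\ddisk$ and preserve the property of having unimodular radial limits a.e.), and $g(0)=\varphi_w^{-1}(f(z))=\varphi_w^{-1}(w)=0$.

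Next I would invoke Löwner's Lemma in its base form: for an inner function $g$ with $g(0)=0$ and any measurable $E'\subseteq\ddisk$, one has $\lambda(g^{-1}(E'))=\lambda(E')$, i.e. $\lambda_0(g^{-1}(E'))=\lambda_0(E')$. The task is then to translate this identity back through the automorphisms. The key change-of-variables fact is the conformal transformation rule for harmonic measure: if $\varphi$ is an automorphism of $\disk$ with $\varphi(0)=a$, then for every measurable $A\subseteq\ddisk$ we have $\lambda(\varphi^{-1}(A))=\lambda_a(A)$, equivalently $\lambda_z(A)=\lambda_{\varphi(z)}(\varphi(A))$ for all $z\in\disk$. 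This is just the statement that harmonic measure is conformally invariant (the pushforward of $\lambda_z$ under $\varphi$ is $\lambda_{\varphi(z)}$), and it holds for the boundary extension of $\varphi$ since $\varphi$ is a homeomorphism of $\cldisk$.

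With these two ingredients the computation is short. Given a measurable $E\subseteq\ddisk$, set $E'=\varphi_w^{-1}(E)$. Then
\begin{equation*}
    f^{-1}(E)=\varphi_z\bigl(g^{-1}(\varphi_w^{-1}(E))\bigr)=\varphi_z\bigl(g^{-1}(E')\bigr),
\end{equation*}
so that, using conformal invariance of harmonic measure with $\varphi=\varphi_z$, then Löwner at the origin, then conformal invariance again with $\varphi=\varphi_w$,
\begin{equation*}
    \lambda_z(f^{-1}(E))=\lambda_z\bigl(\varphi_z(g^{-1}(E'))\bigr)=\lambda_0\bigl(g^{-1}(E')\bigr)=\lambda_0(E')=\lambda_0\bigl(\varphi_w^{-1}(E)\bigr)=\lambda_{w}(E)=\lambda_{f(z)}(E).
\end{equation*}

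The main obstacle, and the only point requiring genuine care, is the interplay between the almost-everywhere-defined boundary map $f^\ast$ and the boundary homeomorphisms $\varphi_z^\ast,\varphi_w^\ast$: one must check that $(\varphi_w^{-1}\circ f\circ\varphi_z)^\ast=\varphi_w^{-1}\circ f^\ast\circ\varphi_z^\ast$ a.e., so that the set-theoretic manipulations above are legitimate up to $\lambda$-null sets. This follows because $\varphi_z$ maps null sets to null sets and extends continuously to the boundary, so radial limits are preserved under composition with it on both sides; I would state this as a brief lemma or simply remark that harmonic measure is a conformal invariant and that inner functions are stable under composition with automorphisms, both of which are standard (see, e.g., \cite{ref:GarnettBoundedAnalyticFunctions}). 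Everything else is bookkeeping with Möbius maps.
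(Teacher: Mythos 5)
Your proposal is correct and is exactly the argument the paper has in mind: the paper states only the classical Löwner result (inner functions fixing the origin preserve $\lambda$) and asserts the conformally invariant version follows, which is precisely your reduction via disc automorphisms and the conformal invariance of harmonic measure. The one point you flag as delicate --- compatibility of boundary extensions under composition with automorphisms --- is handled correctly, since automorphisms are homeomorphisms of $\cldisk$ preserving null sets.
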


    Observe that, if $z \in \disk$ is a fixed point of $f$, Theorem \ref{thm:LownerLemma} says that $\lambda_z$ is invariant under the action of $f.$
    However, it may be the case that $f$ has no fixed points in $\disk$ but only on $\ddisk.$
    A point $p \in \ddisk$ is a fixed point for $f$ if $\lim_{r \to 1} f(rp) = p.$
    Actually, the classical Denjoy-Wolff Theorem states that for any analytic self mapping $f$ on the unit disc which is not an elliptic automorphism, there exists a fixed point $p \in \cldisk$ of $f,$ called the Denjoy-Wolff fixed point of $f,$ such that the iterates $f^n = f \circ \overset{n)}{\ldots} \circ f$ tend to $p$ uniformly on compacts sets of $\disk.$
    Moreover, $p$ is the unique fixed point of $f$ in $\cldisk$ such that $0 < |f'(p)| \leq 1.$
    See for example Chapter V of \cite{ref:ShapiroCompositionOperators}. 
    We are interested in analogues of Theorem \ref{thm:LownerLemma} when $z \in \ddisk.$
    This situation occurs naturally when the Denjoy-Wolff fixed point of $f$ is on the unit circle.
    In this situation, instead of considering the harmonic measure from a point in the open unit disc, it is natural to measure sets with respect to boundary points.
    We will consider a measure introduced by Doering and Mañé in \cite{ref:DoeringMane}. 
    Fix a point $p \in \cldisk$ and consider the positive measure $\mu_p$ on $\ddisk$ defined by 
    \begin{equation*}
        \mu_p(E) = \int_E \frac{1}{|\xi -p|^2} \, d\lambda(\xi)
    \end{equation*}
    for any measurable set $E \subseteq \ddisk.$
    Observe that for a point $p \in \ddisk$ the measure $\mu_p$ is not finite, while for $p \in \disk,$ it is just a scalar multiple of the harmonic measure given by $\mu_p = (1-|p|^2)^{-1} \lambda_p.$
    A very natural interpretation of the measure $\mu_p$ when $p \in \ddisk$ is the following.
    Let $\omega_p \colon \disk \rightarrow \halfpl$ be the conformal map from the disc into the upper half-plane $\halfpl$ such that $\omega_p(p) = \infty$ and $\omega_p(0) = i/2.$
    Then, for any measurable set $E \subseteq \ddisk,$ we have that $\mu_p(E) = |\omega_p(E)|,$ where we denote by $|A|$ the Lebesgue measure of a set $A \subseteq \reals.$
    Roughly speaking, for a point $p \in \ddisk,$ the measure $\mu_p$ gives information about the size and the distribution of a set around the point $p.$
    Sets having large $\mu_p$ measure are those that are highly concentrated around the point $p.$
    In particular, if $E$ is an open neighbourhood of $p,$ then $\mu_p(E) = \infty.$
    Our first result is the following analogue of Theorem \ref{thm:LownerLemma}.

    \begin{thm}
        \label{thm:LownerMuP}
        Let $f\colon \disk \to \disk$ be an inner function and let $p \in \ddisk$ be a boundary Fatou point of $f.$
        
        \begin{enumerate}[(a)]
            \item
            \label{stm:LownerMuPFiniteDerivative}
            Assume $|f'(p)| < \infty.$
            Then
            \begin{equation*}
                \mu_p(f^{-1}(E)) = |f'(p)|\mu_{f(p)}(E)
            \end{equation*}
            for any measurable set $E \subseteq \ddisk.$
            
            \item
            If $|f'(p)| = \infty$ and $E \subseteq \disk$ is a measurable set, then $\mu_p(f^{-1}(E)) = \infty$ if $\mu_{f(p)}(E) > 0$ and  $\mu_p(f^{-1}(E)) = 0$ if $\mu_{f(p)}(E) = 0.$
        \end{enumerate}
    \end{thm}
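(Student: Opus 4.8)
The plan is to transfer the problem to the upper half-plane, where the measure $\mu_p$ becomes ordinary Lebesgue measure and L\"owner's lemma (Theorem \ref{thm:LownerLemma}) can be applied with the conformal invariance already built in. Let $q = f(p) \in \ddisk$ (in part (b), if $p$ is not a Fatou point or $f(p) \notin \ddisk$ one argues separately; but as stated $p$ is a Fatou point so $q \in \ddisk$). Let $\omega_p \colon \disk \to \halfpl$ and $\omega_q \colon \disk \to \halfpl$ be the conformal maps normalized as in the excerpt, so that $\mu_p(A) = |\omega_p(A)|$ and $\mu_q(B) = |\omega_q(B)|$ for measurable $A, B \subseteq \ddisk$. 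Then $g \coloneqq \omega_q \circ f \circ \omega_p^{-1} \colon \halfpl \to \halfpl$ is an inner function of the half-plane, and the boundary point $p$ corresponds to $\infty$, $q$ corresponds to $\infty$, so $g$ fixes $\infty$. By the Julia--Carath\'eodory theorem, $g$ fixing $\infty$ with finite angular derivative there means $g(w)/w \to a$ as $w \to \infty$ non-tangentially, where $a > 0$ is the angular derivative of $g$ at $\infty$; and a computation with the chain rule relating $|f'(p)|$ to $a$ (tracking the derivatives of $\omega_p$ and $\omega_q$ at the boundary) should give $a = |f'(p)| \cdot c$ for an explicit constant $c$ coming from the normalizations $\omega_p(0) = i/2$, $\omega_q(0) = i/2$ — in fact I expect $c = 1$ or a clean constant, and the normalization $i/2$ is presumably chosen precisely so that $c = 1$.

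The key step is then the following half-plane version of L\"owner's lemma: if $g \colon \halfpl \to \halfpl$ is inner and fixes $\infty$ with angular derivative $a$ at $\infty$, then $|g^{-1}(E)| = a |E|$ for every measurable $E \subseteq \reals$. I would prove this by approximation: for $t > 0$, consider the point $it \in \halfpl$; by Theorem \ref{thm:LownerLemma} transported to the half-plane (via a M\"obius map sending $\halfpl$ to $\disk$), harmonic measure is distorted correctly, namely the harmonic measure $P_{it}$ from $it$ satisfies $P_{it}(g^{-1}(E)) = P_{g(it)}(E)$. The Poisson kernel at $it$ is $\pi^{-1} t/(x^2 + t^2)$; multiplying by $\pi t$ and letting $t \to \infty$, the measure $\pi t \, P_{it}$ converges weakly (on bounded sets, after localization) to Lebesgue measure on $\reals$. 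On the other side, $g(it) \to \infty$ along a curve that is eventually non-tangential (since $g$ has finite angular derivative at $\infty$, $g(it)$ approaches $\infty$ with $\mathrm{Im}\, g(it) \sim a t$ and $|\mathrm{Re}\, g(it)|$ controlled), so $\pi t \, P_{g(it)} \to a^{-1} \cdot (a \cdot \text{Lebesgue})$... more carefully, $\mathrm{Im}\, g(it) \sim at$ forces $\pi t \, P_{g(it)} \to a \cdot |\cdot|$ after the change of variables. Taking the limit in $\pi t \, P_{it}(g^{-1}(E)) = \pi t \, P_{g(it)}(E)$ yields $|g^{-1}(E)| = a|E|$. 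Transporting back through $\omega_p$, $\omega_q$ gives $\mu_p(f^{-1}(E)) = |f'(p)| \mu_q(E)$, which is part (a).

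For part (b), when $|f'(p)| = \infty$, the angular derivative of $g$ at $\infty$ is infinite, meaning $\mathrm{Im}\, g(it)/t \to \infty$. Running the same limiting argument, $\pi t \, P_{g(it)}(E) \to \infty \cdot |E|$: precisely, for any compact $E$ with $|E| > 0$ one gets $\liminf \pi t\, P_{it}(g^{-1}(E)) = \infty$, and by inner regularity this forces $|g^{-1}(E)| = \infty$, hence $\mu_p(f^{-1}(E)) = \infty$ whenever $\mu_q(E) > 0$. Conversely if $\mu_q(E) = 0$ then $|\omega_q(E)| = 0$, so $E$ has Lebesgue measure zero on $\ddisk$; since $f$ preserves Lebesgue null sets (as $\lambda(f^{-1}(E)) = \lambda_{f(0)}(E) = 0$ by Theorem \ref{thm:LownerLemma} when $\lambda(E) = 0$, using that $\lambda$ and $\lambda_0$ have the same null sets), $f^{-1}(E)$ is $\lambda$-null and therefore $\mu_p$-null.

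I expect the main obstacle to be the rigorous justification of the weak convergence $\pi t\, P_{g(it)} \to a \cdot |\cdot|$ in part (a): one must show that the mass of $P_{g(it)}$ does not escape to $\infty$ along $\reals$, i.e. that $g(it)$ does not drift too far horizontally. This is exactly controlled by the Julia--Carath\'eodory theorem, which gives not only $\mathrm{Im}\, g(it) \sim at$ but also that $g(it)$ stays in a bounded horizontal strip (equivalently, $g(it)/(it) \to a$ as a genuine complex limit, not just $|g(it)|/t$), so $\mathrm{Re}\, g(it) = o(t)$ and the Poisson kernels, suitably rescaled, do converge to Lebesgue measure times $a$. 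The bookkeeping of the constant in the chain-rule step relating $a$ and $|f'(p)|$ through the normalizations of $\omega_p$ and $\omega_q$ is the other place to be careful.
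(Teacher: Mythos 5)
Your proposal is essentially the paper's own proof written in half-plane coordinates: the paper likewise approximates $p$ from inside along a non-tangential sequence $\{p_n\}$, applies the interior L\"owner identity in the rescaled form $\mu_{p_n}(f^{-1}(E)) = \tfrac{1-|f(p_n)|^2}{1-|p_n|^2}\,\mu_{f(p_n)}(E)$, and passes to the limit; its Lemma~\ref{lemma:MeasuresConvergence} plays the role of your weak convergence of $\pi t P_{it}$ to Lebesgue measure, and its Lemma~\ref{lemma:FiniteDerivativeCones} is exactly your Julia--Carath\'eodory control $\operatorname{Re} g(it) = o(t)$. One correction to the bookkeeping you flagged as delicate: with $a = \lim_{w\to\infty} g(w)/w$, the half-plane L\"owner lemma reads $|g^{-1}(E)| = a^{-1}|E|$ (test $g(w)=2w$, for which $g^{-1}(E)=E/2$), and correspondingly $a = 1/|f'(p)|$ rather than $|f'(p)|$; your two reciprocal slips cancel, so the final identity is correct, but the intermediate lemma as you state it is false. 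Finally, in part (b) the assertion $\pi t\, P_{g(it)}(E) \to \infty$ is not automatic from $|g'(\infty)|=\infty$ alone: you need the translated Julia--Carath\'eodory input $t\,\Im g(it)/|g(it)|^2 \to \infty$ together with $|g(it)|\to\infty$ to rule out the mass of $P_{g(it)}$ escaping horizontally, whereas the paper sidesteps this by staying on the compact circle, where the uniform bound $\mu_w(E) \ge \lambda(E)/4$ for all $w \in \cldisk$ suffices.
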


    As we can see, we still have a general relation between the measure of a set and its preimage under $f,$ independent from the set.
    Nonetheless, in this case, a distortion term appears and it is given by the size of the angular derivative at the point $p.$
    If $p \in \ddisk$ is the Denjoy-Wolff fixed point of $f,$ this result was previously proved in \cite{ref:DoeringMane}. 
    
    In \cite{ref:FernandezPestanaDistortionInnerFunctions}, Fernández and Pestana studied the distortion of Hausdorff contents under inner functions.
    Fixed $z \in \disk$ and $0 < \alpha < 1,$ consider the Hausdorff content defined as 
    \begin{equation*}
        M_{\alpha}(\lambda_z)(E) = \inf \sum_j \lambda_z (I_j)^{\alpha}, 
    \end{equation*}
    where the infimum is taken over all collections of arcs $\{I_j \}$ of the unit circle such that $E \subseteq \bigcup I_j$. Thus $M_{\alpha}(\lambda_0) (E)$ is the standard Hausdorff content of $E,$ which is denoted by $M_\alpha (E)$. Observe that if $z \in \disk$ and $\tau$ is the automorphism of $\disk$ which interchanges $z$ and $0$, then $M_{\alpha}(\lambda_z)(E) = M_\alpha ({\tau}^{-1} (E))$ for any $E \subseteq \ddisk $. Fern\'andez and Pestana proved the following result, analogous to Theorem \ref{thm:LownerMuP} for Hausdorff contents, stated here in a conformally invariant way.
    \begin{thmA}
        \label{thm:ConformalFernandezPestana}
        For any $0 < \alpha < 1$ there exists a constant $C_\alpha > 0$ such that, if $f\colon \disk \to \disk$ is an inner function and $z \in \disk,$ we have  
        \begin{equation*}
            M_{\alpha}(\lambda_z)(f^{-1}(E)) \geq C_{\alpha} M_{\alpha}(\lambda_{f(z)})(E)
        \end{equation*}
        for any Borel set $E \subseteq \ddisk.$
    \end{thmA}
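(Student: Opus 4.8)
The plan is to deduce this from Löwner's lemma (Theorem~\ref{thm:LownerLemma}) by a potential-theoretic argument, after a conformal normalisation.

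\emph{Reduction.} If $\tau$ is the automorphism of $\disk$ interchanging $z$ and $0$ and $\rho$ the one interchanging $f(z)$ and $0$, then $g:=\rho\circ f\circ\tau$ is again inner with $g(0)=0$, and a direct computation with the identity $M_\alpha(\lambda_z)(A)=M_\alpha(\tau^{-1}(A))$ recalled above turns the asserted inequality for $(f,z)$ into $M_\alpha(g^{-1}(E'))\ge C_\alpha M_\alpha(E')$ with $E'=\rho(E)$. So it suffices to find $C_\alpha>0$ with $M_\alpha(f^{-1}(E))\ge C_\alpha M_\alpha(E)$ whenever $f$ is inner, $f(0)=0$, and $E\subseteq\ddisk$ is Borel; we may assume $M_\alpha(E)>0$ and, since the content of a Borel set is the supremum of the contents of its compact subsets, that $E$ is compact.

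\emph{Construction of a test function and its boundary measure.} By Frostman's lemma choose a positive measure $\sigma$ supported on $E$ with $\sigma(I)\le\lambda(I)^\alpha$ for every arc $I$ and $\sigma(\ddisk)\ge c_\alpha M_\alpha(E)$. Writing $P(w,\xi)=(1-|w|^2)/|w-\xi|^2$ for the Poisson kernel, set $u(w)=\int_{\ddisk}P(w,\xi)\,d\sigma(\xi)$ and $v=u\circ f$, so $v\ge0$ is harmonic on $\disk$ with $v(0)=u(f(0))=u(0)=\sigma(\ddisk)$. The Frostman condition gives $u(w)\lesssim_\alpha(1-|w|)^{\alpha-1}$ via the usual dyadic splitting of the Poisson kernel, while Schwarz's lemma (here is where $f(0)=0$ enters) gives $1-|f(z)|\ge1-|z|$, whence, since $\alpha-1<0$,
\begin{equation*}
    v(z)\lesssim_\alpha (1-|f(z)|)^{\alpha-1}\le(1-|z|)^{\alpha-1},\qquad z\in\disk.
\end{equation*}
Let $\nu$ be the Riesz--Herglotz representing measure of $v$, so that $v(z)=\int_{\ddisk}P(z,\xi)\,d\nu(\xi)$ and $\nu(\ddisk)=v(0)=\sigma(\ddisk)\ge c_\alpha M_\alpha(E)$. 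Evaluating this representation at the point $w_I$ lying over the midpoint of $I$ at height $\lambda(I)$ and invoking the growth bound yields $\nu(I)\lesssim_\alpha\lambda(I)\,v(w_I)\lesssim_\alpha\lambda(I)^\alpha$ for every arc $I$, so $\nu$ is again a Frostman measure for the exponent $\alpha$.

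\emph{Localizing $\nu$ on $f^{-1}(E)$.} Since $u(w)=\int_E P(w,\zeta)\,d\sigma(\zeta)$ we have $v(z)=\int_E P(f(z),\zeta)\,d\sigma(\zeta)$; for fixed $\zeta\in\ddisk$ the map $z\mapsto P(f(z),\zeta)$ is nonnegative harmonic with value $P(0,\zeta)=1$ at the origin, hence equals the Poisson integral of a probability measure $\omega_\zeta$ on $\ddisk$, and Fubini gives $\nu=\int_E\omega_\zeta\,d\sigma(\zeta)$. It is therefore enough to show that each $\omega_\zeta$, $\zeta\in E$, is carried by $\{\xi:f^*(\xi)=\zeta\}\subseteq f^{-1}(E)$. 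As $f$ is inner, $f^*\in\ddisk$ a.e., and Löwner's lemma (Theorem~\ref{thm:LownerLemma} with $z=0$) makes $\{f^*=\zeta\}$ a $\lambda$-null set; consequently $P(f(r\xi),\zeta)\to0$ as $r\to1$ for a.e.\ $\xi$, so $\omega_\zeta\perp\lambda$. A purely singular positive measure is carried by the set of points where the radial limit of its Poisson integral equals $+\infty$, and $P(f(r\xi),\zeta)\to+\infty$ forces $f(r\xi)\to\zeta$, i.e.\ $f^*(\xi)=\zeta$. Hence $\omega_\zeta$ is carried by $\{f^*=\zeta\}$, and so $\nu$ is carried by $f^{-1}(E)$.

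\emph{Conclusion and main obstacle.} For any covering of $f^{-1}(E)$ by arcs $\{I_j\}$,
\begin{equation*}
    c_\alpha M_\alpha(E)\le\nu(\ddisk)=\nu(f^{-1}(E))\le\sum_j\nu(I_j)\lesssim_\alpha\sum_j\lambda(I_j)^\alpha,
\end{equation*}
and taking the infimum over such coverings gives $M_\alpha(f^{-1}(E))\gtrsim_\alpha M_\alpha(E)$, which is the claim. I expect the crux to be the localization step: a priori $P(f(\cdot),\zeta)$ may blow up along sequences approaching the singular set of $f$ (the set of boundary points where $f$ fails to extend analytically), which can be large — even all of $\ddisk$ — so one must argue carefully, using that $f$ is inner together with the precise blow-up behaviour of the Poisson kernel, that the representing measure $\omega_\zeta$ charges only the exact fibre $\{f^*=\zeta\}$ and not its closure, and likewise that $v$ contributes no atomic part to $\nu$.
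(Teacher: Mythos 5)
The paper does not prove this statement: it is a lettered theorem, quoted from Fern\'andez and Pestana \cite{ref:FernandezPestanaDistortionInnerFunctions} and merely restated in conformally invariant form, so there is no in-paper argument to compare against. Your proof is correct, and in fact it reconstructs essentially the original Fern\'andez--Pestana argument: normalise to $f(0)=0$ via the identity $M_\alpha(\lambda_z)(A)=M_\alpha(\tau^{-1}(A))$, take a Frostman measure $\sigma$ on $E$, transplant the growth bound $u(w)\lesssim_\alpha(1-|w|)^{\alpha-1}$ through $f$ by Schwarz's lemma, and read off the Frostman condition and the total mass of the Herglotz measure $\nu$ of $u\circ f$. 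The measures $\omega_\zeta$ you introduce are precisely the Aleksandrov--Clark measures of $f$ at $\zeta$, and your Fubini identity $\nu=\int_E\omega_\zeta\,d\sigma(\zeta)$ is Aleksandrov's disintegration theorem; that for inner $f$ each $\omega_\zeta$ is singular and carried by the exact fibre $\{f^*=\zeta\}$ is a classical fact, and your derivation of it is sound: the a.e.\ vanishing of the radial limits kills the absolutely continuous part by Fatou's theorem, a positive singular measure lives where its Poisson integral has radial limit $+\infty$ (since the lower symmetric derivative is $+\infty$ at $\omega_\zeta$-a.e.\ point), and $P(w,\zeta)\ge\tfrac12|w-\zeta|^{-1}$... wait, rather $P(w,\zeta)\le 2/|w-\zeta|$, so $P(f(r\xi),\zeta)\to\infty$ does force $f(r\xi)\to\zeta$. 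So the ``main obstacle'' you flag at the end is already fully disposed of by your own localization step; atoms of $\nu$ cause no trouble for the same reason, since an atom at $\xi_0$ forces $f^*(\xi_0)=\zeta$. The only ingredients that deserve an explicit citation rather than concern are the reduction to compact $E$ (inner regularity of Hausdorff content on Borel, indeed Suslin, sets, which rests on Choquet capacitability \`a la Davies --- alternatively, invoke Frostman's lemma for Borel sets directly, which yields a measure supported on a compact subset of $E$) and the weak-$*$ measurability of $\zeta\mapsto\omega_\zeta$ needed for the disintegration, which follows from the continuity of $\zeta\mapsto P(f(z),\zeta)$ and uniqueness of the Herglotz representation.
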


    It is also shown in \cite{ref:FernandezPestanaDistortionInnerFunctions} that there exists an inner function $f$ such that the preimage of a single point has Hausdorff dimension $1.$
    Hence, the converse estimate in Theorem \ref{thm:ConformalFernandezPestana} is false.
    It is worth mentioning that a related result for sets $E \subseteq \disk$ was established in \cite{ref:Hamilton}.
    For $0 < \alpha < 1$ and $p \in \ddisk,$ we define the $(p,\alpha)$-Hausdorff content of a Borel set $E \subseteq \ddisk$ as
    \begin{equation*}
        M_{\alpha}(\mu_p)(E) \coloneqq \inf \sum_j \mu_p (I_j)^{\alpha}, 
    \end{equation*}
    where the infimum is taken over all collections of arcs $\{I_j\}$ of the unit circle such that $E \setminus \{p\} \subseteq \bigcup I_j.$
    Our second result is the following analogue of Theorem \ref{thm:ConformalFernandezPestana} when $z \in \ddisk.$
    
    \begin{thm}
        \label{thm:ContentsMuP}
        Let $f\colon \disk \to \disk$ be an inner function and let $p \in \ddisk$ be a boundary Fatou point of $f.$

        \begin{enumerate}[(a)]
            \item
            \label{stm:ContentsMuPFiniteDerivative}
            Assume $|f'(p)| < \infty.$
            Then for any $0 < \alpha < 1$ there exists a constant $C_\alpha > 0,$ independent of $f,$ such that
            \begin{equation*}
                M_{\alpha}(\mu_p)(f^{-1}(E)) \geq C_{\alpha} |f'(p)|^{\alpha} M_{\alpha}(\mu_{f(p)})(E)
            \end{equation*}
            for any Borel set $E \subseteq \ddisk.$

            \item
            Assume $|f'(p)| = \infty.$
            Then we have that $M_{\alpha}(\mu_p)(f^{-1}(E)) = \infty$ for any Borel set $E \subseteq \ddisk$ such that  $M_{\alpha}(\mu_{f(p)})(E) > 0.$
        \end{enumerate}
    \end{thm}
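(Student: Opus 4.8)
The plan is to deduce Theorem~\ref{thm:ContentsMuP} from Fern\'andez--Pestana's Theorem~\ref{thm:ConformalFernandezPestana} by pushing the base point to $p$. Set $q=f(p)$, and for $w\in\disk$ let $M_\alpha(\mu_w)$ be given by the same formula as $M_\alpha(\mu_p)$ (the excluded point plays no role here, since $w\notin\ddisk$). Because $\mu_w=(1-|w|^2)^{-1}\lambda_w$ for $w\in\disk$, one has, for every Borel $A\subseteq\ddisk$,
\begin{equation*}
    M_\alpha(\mu_w)(A)=(1-|w|^2)^{-\alpha}\,M_\alpha(\lambda_w)(A),\qquad w\in\disk.
\end{equation*}
Feeding this into Theorem~\ref{thm:ConformalFernandezPestana} with base point $z\in\disk$ turns it into
\begin{equation*}
    M_\alpha(\mu_z)\bigl(f^{-1}(E)\bigr)\ \ge\ C_\alpha\left(\frac{1-|f(z)|^{2}}{1-|z|^{2}}\right)^{\!\alpha}M_\alpha(\mu_{f(z)})(E).
\end{equation*}
I would now specialise to the radial points $z=z_R:=(1-1/R)p$ and let $R\to\infty$. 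Since $p$ is a boundary Fatou point, $f(z_R)\to q$; and, by the Julia--Carath\'eodory theorem (cf.\ \eqref{eq:ModulusDerivative} and Chapters~IV--V of \cite{ref:ShapiroCompositionOperators}), the factor $\bigl((1-|f(z_R)|^{2})/(1-|z_R|^{2})\bigr)^{\alpha}$ tends to $|f'(p)|^{\alpha}$ if $|f'(p)|<\infty$ and to $+\infty$ if $|f'(p)|=\infty$. It then remains only to understand the two $\mu$-contents along this sequence; we may assume $M_\alpha(\mu_p)(f^{-1}(E))<\infty$, the theorem being otherwise trivial.

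The crux is the semicontinuity of $w\mapsto M_\alpha(\mu_w)$ as $w$ approaches the boundary, and I expect this to be the main obstacle. Two one-sided bounds are needed. For the \emph{upper} bound at $p$: a Poisson-kernel estimate (using $|\xi-z_R|^{2}\asymp(1-|z_R|)^{2}+\dist(\xi,p)^{2}$ for radial $z_R$, and distinguishing arcs $I$ according to whether $\dist(I,p)$ is large or small compared with $1-|z_R|$) gives $\mu_{z_R}(I)\le C\,\mu_p(I)$ for every arc $I$ with $p\notin\overline I$, with $C$ absolute. Taking a covering $\{I_j\}$ of $f^{-1}(E)\setminus\{p\}$ nearly optimal for $\mu_p$ (automatically $p\notin\overline{I_j}$, since otherwise $\mu_p(I_j)=\infty$) and recalling that $\mu_{z_R}$ is non-atomic, one gets $\limsup_{R}M_\alpha(\mu_{z_R})(f^{-1}(E))\le C^{\alpha}M_\alpha(\mu_p)(f^{-1}(E))$. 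For the \emph{lower} bound at $q$: whenever $|w-q|$ is small compared with $\delta$ and $J$ is an arc contained in $\{|\xi-q|>\delta/2\}$, one has $\mu_w(J)\gtrsim\mu_q(J)$ with absolute constant; truncating covering arcs to $\{|\xi-q|>\delta/2\}$ is harmless, so $M_\alpha(\mu_w)(E)\ge M_\alpha(\mu_w)(E\cap\{|\xi-q|>\delta\})\gtrsim M_\alpha(\mu_q)(E\cap\{|\xi-q|>\delta\})$ for all $w$ near $q$. Letting $\delta\to0$ and reducing to compact subsets of $E\setminus\{q\}$ via the capacitability of Hausdorff content yields $\sup_{\delta>0}M_\alpha(\mu_q)(E\cap\{|\xi-q|>\delta\})=M_\alpha(\mu_q)(E)$; this is precisely the step where one sidesteps the failure of continuity from below and of superadditivity of $M_\alpha$, and it also covers the case $M_\alpha(\mu_q)(E)=\infty$. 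Altogether $\liminf_{R}M_\alpha(\mu_{f(z_R)})(E)\gtrsim M_\alpha(\mu_q)(E)$.

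Combining these: the pointwise inequality above holds for every $R$, so by the upper bound $C^{\alpha}M_\alpha(\mu_p)(f^{-1}(E))\ge\limsup_{R}M_\alpha(\mu_{z_R})(f^{-1}(E))\ge\liminf_{R}M_\alpha(\mu_{z_R})(f^{-1}(E))$, whence
\begin{equation*}
    C^{\alpha}M_\alpha(\mu_p)\bigl(f^{-1}(E)\bigr)\ \ge\ C_\alpha\,\liminf_{R\to\infty}\left[\left(\frac{1-|f(z_R)|^{2}}{1-|z_R|^{2}}\right)^{\!\alpha}M_\alpha(\mu_{f(z_R)})(E)\right].
\end{equation*}
In case~(a) the first factor converges to $|f'(p)|^{\alpha}\in(0,\infty)$, so the right-hand side is $\gtrsim C_\alpha|f'(p)|^{\alpha}M_\alpha(\mu_q)(E)$, which is the assertion after absorbing absolute constants into $C_\alpha$. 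In case~(b) the first factor diverges while $\liminf_{R}M_\alpha(\mu_{f(z_R)})(E)\gtrsim M_\alpha(\mu_q)(E)>0$ by hypothesis, so the right-hand side is $+\infty$, forcing $M_\alpha(\mu_p)(f^{-1}(E))=\infty$. An alternative packaging of the argument, which makes the content transfer transparent, is to conjugate by $\omega_p$ and $\omega_q$: then $M_\alpha(\mu_p)$ and $M_\alpha(\mu_q)$ become the ordinary Hausdorff $\alpha$-content on $\reals$, $f$ becomes an inner self-map $g$ of $\halfpl$ with $g(w)\sim|f'(p)|^{-1}w$ at infinity, and one runs the same limiting scheme around the point $\infty$.
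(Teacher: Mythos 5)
Your proof is correct, and its engine is the same as the paper's: the conformally invariant form of Theorem~\ref{thm:ConformalFernandezPestana},
\begin{equation*}
    M_\alpha(\mu_z)\bigl(f^{-1}(E)\bigr)\ \ge\ C_\alpha\left(\frac{1-|f(z)|^{2}}{1-|z|^{2}}\right)^{\!\alpha}M_\alpha(\mu_{f(z)})(E),
\end{equation*}
applied along a radial (hence non-tangential) sequence tending to $p$, along which the quotient tends to $|f'(p)|$ by Julia--Carath\'eodory, followed by a limiting argument for the two contents. Where you genuinely differ is in how that limit is taken. The paper proves full two-sided convergence $M_\alpha(\mu_{p_n})(A)\to M_\alpha(\mu_p)(A)$ for non-tangentially convergent sequences (Lemma~\ref{lemma:ContentsNTConvergence}), and must therefore invoke Lemma~\ref{lemma:FiniteDerivativeCones} to ensure that $f(p_n)\to f(p)$ non-tangentially in case~(a), plus a separate truncation $E^*=E\setminus I$ in case~(b), where that lemma is unavailable. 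You instead isolate the two one-sided bounds that are actually needed: an upper bound at $p$ (your estimate $\mu_{z_R}(I)\le C\,\mu_p(I)$ is exactly Lemma~\ref{lemma:BoundaryInteriorMuP}) and a lower bound at $f(p)$ that holds for arbitrary, even tangential, approach. This dispenses with Lemma~\ref{lemma:FiniteDerivativeCones} entirely and treats cases (a) and (b) uniformly, which is a modest but real simplification. One step deserves care: the identity $\sup_{\delta>0}M_\alpha(\mu_q)(E\cap\{|\xi-q|>\delta\})=M_\alpha(\mu_q)(E)$ is elementary by subadditivity when the right-hand side is finite (and for case~(b) only positivity for some $\delta$ is needed, again by countable subadditivity), but when $M_\alpha(\mu_q)(E)=\infty$ it genuinely requires the increasing-sets property (capacitability) of Hausdorff contents, which you correctly flag; the paper's Lemma~\ref{lemma:ContentsNTConvergence} relies tacitly on the same fact in its infinite-content case, so your argument is no less rigorous on this point.
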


    The proofs of Theorem \ref{thm:LownerMuP} and Theorem \ref{thm:ContentsMuP} are given in Section \ref{sec:BoundaryTheorems}.
    In Section \ref{sec:Applications} we give two applications of our results.
    The first one concerns a smoothness property of inner functions which omit large sets of the unit disc and it is inspired on a nice result in \cite{ref:FernandezPestanaDistortionInnerFunctions}.
    In the second application we obtain analogue results on distortion of sets in the real line under inner mappings of the upper half plane. 
    
    It is a pleasure to thank J.~J.~Donaire, J.~L.~Fernández, P.~Gorkin and M.~V.~Melián for helpful discusions.

    \section{Boundary distortion theorems}
    \label{sec:BoundaryTheorems}
    In this section we prove our main results.
    We start with some elementary properties of the measure $\mu_p$ and the content $M_{\alpha}(\mu_p).$
    Recall that a sequence of points $\lbrace p_n \rbrace \subseteq \disk$ converges non-tangentially to a point $p \in \ddisk$ if $\lim p_n = p$ and there exists $\beta > 0$ such that $\{p_n\} \subseteq \Gamma_\beta (p).$

    \begin{lemma}
        \label{lemma:MeasuresConvergence}
        Let $p \in \ddisk.$ For every sequence of points $\lbrace p_n \rbrace \subseteq \disk$ converging non-tangentially to $p,$ we have
        \begin{equation*}
            \mu_{p_n}(E) \longrightarrow \mu_p(E),\quad \text{ as } n \to \infty, 
        \end{equation*}
        for any measurable set $E \subseteq \ddisk.$
    \end{lemma}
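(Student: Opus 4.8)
The plan is to reduce the statement to dominated convergence by writing everything in terms of densities with respect to Lebesgue measure $\lambda$. For $p_n \in \disk$ and $p \in \ddisk$ we have the explicit densities
\begin{equation*}
    \frac{d\mu_{p_n}}{d\lambda}(\xi) = \frac{1}{|\xi - p_n|^2}, \qquad \frac{d\mu_p}{d\lambda}(\xi) = \frac{1}{|\xi - p|^2},
\end{equation*}
so $\mu_{p_n}(E) = \int_E |\xi - p_n|^{-2}\, d\lambda(\xi)$, and the claim is that this converges to $\int_E |\xi - p|^{-2}\, d\lambda(\xi)$. Pointwise convergence of the integrands for $\xi \neq p$ is immediate from $p_n \to p$. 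The only real issue is an integrable majorant, and this is exactly where the non-tangential hypothesis enters: if $p_n \in \Gamma_\beta(p)$ then $|\xi - p_n|$ is comparable to $|\xi - p| + (1 - |p_n|)$, and in particular $|\xi - p_n| \geq c_\beta |\xi - p|$ for a constant $c_\beta > 0$ depending only on $\beta$. This is the standard geometric estimate for Stolz angles; I would prove it by noting that for $z \in \Gamma_\beta(p)$ one has $1 - |z| \geq (1+\beta)^{-1}|z - p|$, and then splitting into the cases $|\xi - p| \leq 2|z - p|$ (use the lower bound via $1 - |z|$) and $|\xi - p| > 2|z - p|$ (triangle inequality gives $|\xi - z| \geq |\xi - p|/2$).

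With this comparison in hand, the majorant is $g(\xi) = c_\beta^{-2}|\xi - p|^{-2}$, which dominates every integrand $|\xi - p_n|^{-2}$ uniformly in $n$. If $\mu_p(E) < \infty$ then $g \in L^1(E, \lambda)$ and the dominated convergence theorem gives $\mu_{p_n}(E) \to \mu_p(E)$ directly. If $\mu_p(E) = +\infty$, one argues instead by Fatou's lemma: $\liminf_n \mu_{p_n}(E) \geq \int_E \liminf_n |\xi - p_n|^{-2}\, d\lambda = \mu_p(E) = \infty$, so $\mu_{p_n}(E) \to \infty$ as well. Either way the desired convergence holds, with the convention that convergence to $+\infty$ is allowed.

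I do not expect a genuine obstacle here; the lemma is essentially a packaging of the Stolz-angle comparison together with standard measure-theoretic limit theorems. The one point requiring a little care is making sure the constant $c_\beta$ in the comparison $|\xi - p_n| \geq c_\beta|\xi - p|$ depends only on the aperture $\beta$ of the cone containing the whole sequence, and not on the individual points $p_n$ — but since all $p_n$ lie in a single $\Gamma_\beta(p)$ by hypothesis, this is automatic. It is worth recording the comparison estimate as a displayed inequality since it will presumably be reused in the proofs of Theorems \ref{thm:LownerMuP} and \ref{thm:ContentsMuP}.
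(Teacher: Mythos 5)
Your proof is correct and follows essentially the same route as the paper's: the Stolz-angle comparison $|\xi - p_n| \geq c_\beta\,|\xi - p|$, Fatou's lemma for the case $\mu_p(E)=\infty$, and dominated convergence for the case $\mu_p(E)<\infty$. The only (harmless) difference is that you apply dominated convergence directly on all of $E$ with the majorant $c_\beta^{-2}|\xi - p|^{-2}$, whereas the paper first splits off a small arc around $p$ and runs an $\varepsilon$-argument; your version is slightly more streamlined.
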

    \begin{proof}
        Let $\lbrace p_n \rbrace_n \subseteq \disk$ be any sequence of points approaching $p,$ and write $\mu_n = \mu_{p_n}$ for every $n \geq 1.$
        By Fatou's Lemma, we have
        \begin{equation*}
            \liminf_n \mu_n(E) \geq \int_E \lim_n \frac{1}{|\xi -p_n|^2}\, d\lambda(\xi) = \mu_p(E),
        \end{equation*}
        from which it follows that the result is true when $\mu_p(E) = \infty.$
        So assume $\mu_p(E) < \infty.$
        Fix $\varepsilon > 0$ and consider an arc $I$ centred at $p$ and such that $\mu_p(E \cap I) < \varepsilon.$
        Since $p_n \to p$ non-tangentially, there exists a constant $C > 0$ such that $|\xi - p_n| \geq C |\xi - p|$ for every $\xi \in \ddisk$ and every $n \geq 1.$
        Hence, we have that $\mu_n(E \cap I) \leq C^{-2} \varepsilon$ for every $n.$
        On the other hand, by dominated convergence, we have that
        \begin{equation*}
            \mu_n(E \cap (\ddisk \setminus I)) \longrightarrow \mu_p(E \cap (\ddisk \setminus I)),\quad \text{ as } n \to \infty,
        \end{equation*}
        from which the result follows.
    \end{proof}

    Observe that the assumption on the non-tangential convergence of the sequence $\{p_n\}$ to $p$ only enters into play if $p \in \overline{E}.$
    If $p \notin \overline{E},$ the result holds true for any approaching sequence.
    However, as the following example shows, Lemma \ref{lemma:MeasuresConvergence} fails badly if $p_n$ approaches $p$ tangentially.
    Fix a point $p \in \ddisk$ and consider a sequence of points $\{\xi_n\} \subseteq \ddisk$ such that $|\xi_n - p| = 1/(2n)$ for every $n \geq 1.$
    Consider as well the sequence of pairwise disjoint arcs $\{I_n\}$ such that $I_n$ is centred at $\xi_n$ and $\lambda(I_n) = 1/(4n^4)$ for every $n \geq 1.$
    Now, let $E \coloneqq \bigcup_n I_n,$ $p_n = \left(1-\lambda(I_n)\right) \xi_n,$ and $\mu_n = \mu_{p_n},$ for every $n \geq 1.$
    Since $(1-|p_n|)/|p-p_n| \leq 1/n^3 \longrightarrow 0,$ the sequence $\{p_n\}$ converges to $p$ tangentially.
    For $\xi \in I_n,$ we have $|p_n-\xi| \leq 2 \lambda(I_n)$ and $\mu_n(I_n) \geq (4\lambda(I_n))^{-1} = n^4.$
    Now, on one hand we have $\mu_n(E) \geq \mu_n(I_n) \longrightarrow \infty,$ as $n\to \infty.$
    On the other hand since $|p - \xi| \leq 1/n$ for any $\xi \in I_n$, we have $\mu_p (I_n) \leq n^2 \lambda(I_n) = 1/4n^2$ and we deduce
    \begin{equation*}
        \mu_p(E) = \sum_n \mu_p(I_n) < \infty. 
    \end{equation*}

    For $0 < \alpha < 1$ and $z \in \disk$ consider the  $(z,\alpha)$-Hausdorff content of a Borel set $E \subseteq \ddisk$ defined as
    \begin{equation*}
        M_{\alpha}(\mu_z)(E) =  \inf \sum_j \mu_z(I_j)^{\alpha},
    \end{equation*}
    where the infimum is taken over all collections of arcs $\{I_j\}$ such that $E \subseteq \bigcup I_j.$
    
    \begin{lemma}
        \label{lemma:BoundaryInteriorMuP}
        Given $p \in \ddisk$ and $\beta > 0,$ let $\Gamma_\beta(p)$ be the Stolz angle of opening $\beta$ with vertex at $p.$
        Then there exists a constant $C = C(\beta) > 0$ such that 
        $$
        \mu_z(A) \leq C \mu_p(A)
        $$
        for any measurable set $A \subseteq \ddisk$ and any $z \in \Gamma_\beta(p).$
        Consequently, for any $0 < \alpha < 1$ we also have $M_\alpha(\mu_z)(A) \leq C^{\alpha} M_\alpha(\mu_p)(A)$ for any set $A \subseteq \ddisk $ and any $z \in \Gamma_\beta(p).$
    \end{lemma}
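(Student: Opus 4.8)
The plan is to reduce the whole statement to a single pointwise comparison of the densities $|\xi - z|^{-2}$ and $|\xi-p|^{-2}$ appearing in the definitions of $\mu_z$ and $\mu_p$. First I would record the elementary estimate that, for every $\xi \in \ddisk$ and every $z \in \disk$, one has $1 - |z| \le |\xi - z|$, which is just the reverse triangle inequality together with $|\xi| = 1 \ge |z|$. Combining this with the defining property of the Stolz angle, namely $|z - p| < \beta(1-|z|)$ for $z \in \Gamma_\beta(p)$, the triangle inequality yields
\[
|\xi - p| \le |\xi - z| + |z - p| < (1+\beta)\,|\xi - z|
\]
for all $\xi \in \ddisk$ and all $z \in \Gamma_\beta(p)$. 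Squaring and inverting gives $|\xi - z|^{-2} \le (1+\beta)^2 |\xi - p|^{-2}$, and integrating this inequality over an arbitrary measurable set $A \subseteq \ddisk$ against $\lambda$ immediately gives $\mu_z(A) \le (1+\beta)^2 \mu_p(A)$. So the first assertion holds with $C = C(\beta) = (1+\beta)^2$.

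For the statement about Hausdorff contents I would transfer the pointwise bound to coverings. Given any collection of arcs $\{I_j\}$, the inequality just proved gives $\mu_z(I_j)^\alpha \le (1+\beta)^{2\alpha}\mu_p(I_j)^\alpha$ term by term. The only subtlety is that the $(z,\alpha)$-content requires the arcs to cover all of $A$, while the $(p,\alpha)$-content only asks that they cover $A \setminus \{p\}$. To deal with this, fix $\varepsilon > 0$, take a covering $\{I_j\}_{j\ge 1}$ of $A \setminus \{p\}$ with $\sum_j \mu_p(I_j)^\alpha < M_\alpha(\mu_p)(A) + \varepsilon$, and adjoin one small arc $I_0$ centred at $p$; since $z \in \disk$ has positive distance from $p$, we may choose $I_0$ so short that $\mu_z(I_0)^\alpha < \varepsilon$. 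Then $\{I_j\}_{j \ge 0}$ covers $A$ and
\[
M_\alpha(\mu_z)(A) \le \sum_{j \ge 0}\mu_z(I_j)^\alpha \le \varepsilon + (1+\beta)^{2\alpha}\bigl(M_\alpha(\mu_p)(A) + \varepsilon\bigr);
\]
letting $\varepsilon \to 0$ gives $M_\alpha(\mu_z)(A) \le C^\alpha M_\alpha(\mu_p)(A)$ with the same constant $C$.

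I do not expect a genuine obstacle here: the heart of the argument is the two-line geometric estimate $|\xi-p| < (1+\beta)|\xi-z|$, and everything else is bookkeeping. The one place that calls for a moment's attention is precisely the mismatch between the two definitions of Hausdorff content noted above, which is why the extra arc $I_0$ (and the fact that $\mu_z$ of a small arc about $p$ is small when $z\in\disk$) is needed to make the covering admissible for $M_\alpha(\mu_z)$.
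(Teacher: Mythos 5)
Your proof is correct and follows essentially the same route as the paper: the whole lemma rests on the pointwise estimate $|\xi-p|\le(1+\beta)|\xi-z|$ for $\xi\in\ddisk$ and $z\in\Gamma_\beta(p)$, which is exactly the inequality the paper invokes (in the form $|\xi-z|\ge C|\xi-p|$). Your extra step of adjoining a small arc $I_0$ about $p$ to reconcile the two covering conventions ($A$ versus $A\setminus\{p\}$) addresses a detail the paper passes over silently, but it is bookkeeping rather than a different argument.
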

    \begin{proof}
        Observe that there exists a constant $C = C(\beta) > 0$ such that $|\xi - z| \geq C |\xi - p|$ for any $z \in \Gamma_\beta(p)$ and any $\xi \in \ddisk.$
        Hence, $\mu_z(A) \leq C^{-2} \mu_p(A)$ for any measurable set $A \subseteq \ddisk $ and any $z \in \Gamma_\beta(p).$
        This last estimate also gives $M_\alpha(\mu_z)(A) \leq C^{-2 \alpha} M_\alpha(\mu_p)(A).$
    \end{proof}

    The corresponding result to Lemma \ref{lemma:MeasuresConvergence} for Hausdorff contents reads as follows.
    \begin{lemma}
        \label{lemma:ContentsNTConvergence}
        Let  $0 < \alpha < 1$ and $p \in \ddisk.$
        For any sequence of points $\lbrace p_n \rbrace \subseteq \disk$ converging non-tangentially to $p,$ we have
        \begin{equation}
            \label{eq:ContentLimit}
            \lim_{n \to \infty} M_\alpha(\mu_{p_n})(E) = M_\alpha(\mu_p)(E)
        \end{equation}
        for any set $E \subseteq \ddisk.$
    \end{lemma}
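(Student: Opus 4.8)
The plan is to establish separately the two inequalities $\limsup_{n} M_\alpha(\mu_{p_n})(E)\le M_\alpha(\mu_p)(E)$ and $\liminf_{n} M_\alpha(\mu_{p_n})(E)\ge M_\alpha(\mu_p)(E)$. The two main inputs are Lemma~\ref{lemma:MeasuresConvergence} (pointwise convergence $\mu_{p_n}(A)\to\mu_p(A)$) and the non-tangential estimate $|\xi-p_n|\ge C(\beta)|\xi-p|$ used in the proof of Lemma~\ref{lemma:BoundaryInteriorMuP}, which yields both $\mu_{p_n}(A)\le C^{-2}\mu_p(A)$ and the uniform bound $M_\alpha(\mu_{p_n})(A)\le C^{-2\alpha}M_\alpha(\mu_p)(A)$ for every $n$ and every $A\subseteq\ddisk$. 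Throughout I would write $\delta_n=|p-p_n|\to0$, $B(p,\rho)=\{\xi\in\ddisk:|\xi-p|<\rho\}$, $F_\rho=E\setminus B(p,\rho)$, and use that $M_\alpha(\mu_z)(E)=M_\alpha(\mu_z)(E\setminus\{p\})$ for $z\in\disk$, since a single point can be covered by arcs of arbitrarily small $\mu_z$-measure.

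For the upper bound (non-trivial only when $M_\alpha(\mu_p)(E)<\infty$) I would fix $\varepsilon>0$ and a cover of $E\setminus\{p\}$ by arcs $\{I_j\}$ with $\sum_j\mu_p(I_j)^\alpha<M_\alpha(\mu_p)(E)+\varepsilon$; finiteness forces $p\notin\overline{I_j}$ for all $j$, and the tail satisfies $\sum_{j>N}\mu_p(I_j)^\alpha<\varepsilon$ for some $N$. On the finitely many arcs $I_1,\dots,I_N$, Lemma~\ref{lemma:MeasuresConvergence} gives $\mu_{p_n}(I_j)\to\mu_p(I_j)$, while on the tail $\mu_{p_n}(I_j)^\alpha\le C^{-2\alpha}\mu_p(I_j)^\alpha$; since $M_\alpha(\mu_{p_n})(E)=M_\alpha(\mu_{p_n})(E\setminus\{p\})\le\sum_j\mu_{p_n}(I_j)^\alpha$, letting $n\to\infty$ and then $\varepsilon\to0$ gives the claim.

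For the lower bound I would first show, for each fixed $\rho>0$, that $\liminf_n M_\alpha(\mu_{p_n})(F_\rho)\ge M_\alpha(\mu_p)(F_\rho)$. A one-arc cover gives $M_\alpha(\mu_p)(F_\rho)\le\rho^{-2\alpha}$, hence $M_\alpha(\mu_{p_n})(F_\rho)\le C^{-2\alpha}\rho^{-2\alpha}$ for all $n$. Taking a near-optimal cover $\{I_j\}$ of $F_\rho$ for $M_\alpha(\mu_{p_n})$ and discarding arcs disjoint from $F_\rho$, the key observation is that, for $n$ large, no remaining arc can contain $p$ in its closure: reaching from arbitrarily close to $p$ out to distance $\ge\rho$, it would satisfy $\mu_{p_n}(I_j)\gtrsim\delta_n^{-1}\to\infty$, contradicting the uniform bound. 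Hence $\widehat I_j:=I_j\setminus\overline{B(p,\rho/2)}$ is again a single arc, $\{\widehat I_j\}$ still covers $F_\rho$, and on $\ddisk\setminus B(p,\rho/2)$ one has $\mu_p\le(1+o(1))\mu_{p_n}$ because $|\xi-p_n|=|\xi-p|(1+o(1))$ there; this transfers the near-optimal bound to $M_\alpha(\mu_p)(F_\rho)$ with the sharp constant. Since $F_\rho\subseteq E$, this yields $\liminf_n M_\alpha(\mu_{p_n})(E)\ge\sup_{\rho>0}M_\alpha(\mu_p)(F_\rho)$, so what remains is to identify this supremum with $M_\alpha(\mu_p)(E)=M_\alpha(\mu_p)(E\setminus\{p\})$.

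That identification — continuity from below of the content along $F_\rho\uparrow E\setminus\{p\}$ — is the main obstacle. When $M_\alpha(\mu_p)(E)<\infty$ it again follows from the tail idea: a near-optimal cover of $E\setminus\{p\}$ has only finitely many arcs reaching within some $\rho_0>0$ of $p$, so for $\rho<\rho_0$ the part of $E$ near $p$ is covered by the cheap tail, whence $M_\alpha(\mu_p)(F_\rho)>M_\alpha(\mu_p)(E)-\varepsilon$. When $M_\alpha(\mu_p)(E)=\infty$ the naive attempt to patch together covers of the dyadic annuli $E\cap\{2^{-k-1}\rho\le|\xi-p|\le2^{-k}\rho\}$ fails, since each annulus can carry large content and there are infinitely many of them; instead I would invoke that Hausdorff content is continuous from below, or, equivalently and self-containedly, Frostman's lemma together with the inner regularity of Hausdorff content, which produces for every $t$ a compact $K\subseteq E\setminus\{p\}$ with $M_\alpha(\mu_p)(K)>t$; being compact and missing $p$, $K$ lies at positive distance from $p$, so $K\subseteq F_\rho$ for small $\rho$ and $\sup_\rho M_\alpha(\mu_p)(F_\rho)=\infty$, completing the proof of \eqref{eq:ContentLimit}. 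A secondary but genuine subtlety, flagged above, is that one cannot simply truncate an arbitrary arc of a $\mu_{p_n}$-cover away from $p$: an arc straddling $p$ splits into two and costs a factor $2^{1-\alpha}$, which is why ruling out such arcs for large $n$ is essential to preserve the sharp limit.
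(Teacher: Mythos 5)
Your proof is correct and follows essentially the same route as the paper's: the upper bound by pushing a near-optimal $\mu_p$-cover of $E\setminus\{p\}$ through the domination $\mu_{p_n}(I)\leq C\mu_p(I)$ of Lemma \ref{lemma:BoundaryInteriorMuP} and the pointwise convergence of Lemma \ref{lemma:MeasuresConvergence}, and the lower bound by truncating $E$ away from $p$ and using that $\mu_{p_n}$ and $\mu_p$ are comparable with constant $1+o(1)$ off a fixed neighbourhood of $p$. The two points you single out as delicate --- ruling out arcs of a near-optimal $\mu_{p_n}$-cover that straddle $p$ (so that truncation preserves arcs without the $2^{1-\alpha}$ loss), and the identification $\sup_{\rho>0}M_\alpha(\mu_p)(E\setminus B(p,\rho))=M_\alpha(\mu_p)(E)$ when the content is infinite --- are treated more tersely in the paper (the latter is simply asserted there), so your extra care fills in genuine details rather than departing from the argument.
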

    \begin{proof}
        Write $\mu_n = \mu_{p_n}$ for every $n \geq 1.$
        Assume that $M_\alpha(\mu_p)(E) < \infty.$
        In this case, we split the proof of the result into two parts.
        First we show that
        \begin{equation}
            \label{eq:ContentLimitSup}
            \limsup_{n \to \infty} M_\alpha(\mu_{n})(E) \leq M_\alpha(\mu_p)(E),
        \end{equation}
        and then we prove that
        \begin{equation}
            \label{eq:ContentLimitInf}
            \liminf_{n \to \infty} M_\alpha(\mu_{n})(E) \geq M_\alpha(\mu_p)(E),
        \end{equation}
        from which \eqref{eq:ContentLimit} follows immediately.
        To prove \eqref{eq:ContentLimitSup}, given $\varepsilon >0,$ take a covering by open arcs $\{I_j\}$ of the set $E \setminus \{p\}$ such that
        \begin{equation*}
            \sum_j \mu_p(I_j)^\alpha \leq M_\alpha(\mu_p)(E) + \varepsilon.
        \end{equation*}
        Now, by Lemma \ref{lemma:BoundaryInteriorMuP}, for each interval $I_j$ and for every $n \geq 1$ we have that
        \begin{equation*}
            \mu_{n}(I_j) \leq C\mu_p(I_j).
        \end{equation*}
        Thus, by Lemma \ref{lemma:MeasuresConvergence} and dominated convergence, we get that
        \begin{equation*}
            \sum_j \mu_{n}(I_j)^\alpha \longrightarrow \sum_j \mu_p(I_j)^\alpha,\quad \text{ as } n \to \infty.
        \end{equation*}
        By definition, $M_\alpha(\mu_{n})(E) \leq \sum_j \mu_{n}(I_j)^\alpha$ and, thus \eqref{eq:ContentLimitSup} follows immediately.
    
        We prove inequality \eqref{eq:ContentLimitInf} considering two cases.
        Assume first that $p \not\in \overline{E}.$
        Pick $\varepsilon > 0$ and a covering of $E$ by open arcs $\{I_j\},$ such that $\dist(I_j,p) \geq \dist(\overline{E},p)/2$ for every arc $I_j.$
        Observe that, in this situation, there exists $n_0 > 0$ such that if $ n > n_0,$ we have that 
        \begin{equation*}
            \mu_{n}(I_j) \geq (1-\varepsilon)^{1/\alpha} \mu_p(I_j)
        \end{equation*}
        for every arc $I_j$ in our covering.
        Thus, for any such covering of $E  \setminus \{p\},$ if $ n > n_0$ we have that
        \begin{equation*}
            \label{eq:AlphaSumLimitInf}
            \sum_j \mu_{n}(I_j)^\alpha \geq (1-\varepsilon) M_\alpha(\mu_p)(E).
        \end{equation*}
        Observe that the infimum of $\sum_j \mu_{n}(I_j)^\alpha$ when ranging over all coverings $\{I_j\}$ of $E \setminus \{p\}$ by open arcs satisfying that $\dist(I_j,p) \geq \dist(\overline{E},p)/2$ is, precisely, $M_\alpha(\mu_{n})(E).$
        Hence, equation \eqref{eq:ContentLimitInf} follows in the case that $p \not\in \overline{E},$ and therefore equation \eqref{eq:ContentLimit} as well in this situation.
  
        In the case that $p \in \overline{E},$ since we assumed that $M_\alpha(\mu_p) (E) < \infty,$ given $\varepsilon > 0$ we can choose $\delta = \delta(\varepsilon) > 0$ such that
        $M_\alpha(\mu_p)(E \cap I(p,\delta)) < \varepsilon,$  where $I(p,\delta)$ denotes the arc centred at $p$ of length $\delta.$
        Let us denote $E_\delta = E  \setminus I(p,\delta).$
        Since $p \not \in \overline{E_\delta},$ we already know that
        \begin{equation*}
            \lim_{n \to \infty} M_\alpha(\mu_{n})(E_\delta) = M_\alpha(\mu_p)(E_\delta) \geq M_\alpha(\mu_p)(E) - \varepsilon.
        \end{equation*}
        Hence, for any given $\varepsilon > 0,$ we have 
        \begin{equation*}
            \liminf_{n \to \infty} M_\alpha(\mu_{n})(E) \geq \lim_{n \to \infty} M_\alpha(\mu_{n})(E_\delta) \geq M_\alpha(\mu_p)(E) - \varepsilon.
        \end{equation*}
        This concludes the proof whenever $M_\alpha(\mu_p) (E) < \infty.$

        Assume now that $M_\alpha(\mu_p) (E) = \infty.$
        In this case, for any $N > 0$ we can find $\delta = \delta(N) > 0$ such that $M_\alpha(\mu_p)(E_\delta) > N,$ where again $E_\delta = E  \setminus I(p,\delta).$
        Since $p \not \in \overline{E_\delta},$ we have that
        \begin{equation*}
            \lim_{n \to \infty} M_\alpha(\mu_{n})(E_\delta) = M_\alpha(\mu_p)(E_\delta) > N.
        \end{equation*}
        Hence, there exists $n_0 > 0$ such that if $n > n_0,$ then $M_\alpha(\mu_{n})(E_\delta) > N.$
        Using that $M_\alpha(\mu_{n})(E) \geq M_\alpha(\mu_{n})(E_\delta),$ we get \eqref{eq:ContentLimit} in the case in which $M_\alpha(\mu_p) (E) = \infty$ as well.
    \end{proof}
    
    We will use the following auxiliary result which is certainly well known.
    It is included because we have not found a precise reference.

    \begin{lemma}
        \label{lemma:FiniteDerivativeCones}
        Let $f$ be a holomorphic self map of the unit disc.
        Let $\{p_n\}$ be a sequence of points in $\disk$ converging non-tangentially to a point $p \in \ddisk.$
        Assume that $|f'(p)| < \infty,$ then $\{f(p_n)\}$ also converges to $f(p) \in \ddisk$ non-tangentially. 
    \end{lemma}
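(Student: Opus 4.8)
The plan is to combine the Julia--Carath\'eodory theorem with Julia's inequality. Since $|f'(p)| < \infty,$ the Julia--Carath\'eodory theorem (see Chapters IV and V of \cite{ref:ShapiroCompositionOperators}) provides three facts I would rely on: $f$ has a non-tangential limit $\eta \coloneqq f(p) \in \ddisk$ at $p;$ the limit
\begin{equation*}
    f'(p) = \lim_{\Gamma_\gamma(p) \ni z \to p} \frac{\eta - f(z)}{p - z}
\end{equation*}
exists, is finite and nonzero, and takes the same value for \emph{every} $\gamma > 0;$ and Julia's inequality
\begin{equation*}
    \frac{|\eta - f(z)|^2}{1 - |f(z)|^2} \leq |f'(p)| \, \frac{|p - z|^2}{1 - |z|^2}, \qquad z \in \disk,
\end{equation*}
holds. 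Fix $\beta > 0$ with $\{p_n\} \subseteq \Gamma_\beta(p).$

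First I would evaluate the displayed limit along the sequence $\{p_n\},$ which is legitimate since $\{p_n\}$ lies in $\Gamma_\beta(p)$ and $p_n \to p:$ this gives $|\eta - f(p_n)| = (|f'(p)| + o(1))\,|p - p_n|$ as $n \to \infty.$ In particular $f(p_n) \to \eta = f(p) \in \ddisk,$ which already pins down the limit of $\{f(p_n)\}.$ Since $|\eta| = 1$ while $f(p_n) \in \disk$ the quotient never vanishes, so for $n$ large I get the two-sided bound $\tfrac12 |f'(p)|\,|p - p_n| \leq |\eta - f(p_n)| \leq 2 |f'(p)|\,|p - p_n|.$

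The main step is then to show that $|\eta - f(p_n)|$ and $1 - |f(p_n)|$ are comparable, which is what non-tangential approach of $\{f(p_n)\}$ to $\eta$ amounts to. The upper bound together with $|p - p_n| < \beta(1 - |p_n|)$ gives $|\eta - f(p_n)| \leq 2|f'(p)|\beta(1 - |p_n|).$ Feeding the lower bound into Julia's inequality and using $1 - |p_n|^2 \geq 1 - |p_n|,$
\begin{equation*}
    1 - |f(p_n)|^2 \;\geq\; \frac{|\eta - f(p_n)|^2\,(1 - |p_n|^2)}{|f'(p)|\,|p - p_n|^2} \;\geq\; \frac{|f'(p)|}{4}\,(1 - |p_n|),
\end{equation*}
hence $1 - |f(p_n)| \geq \tfrac12(1 - |f(p_n)|^2) \geq \tfrac{|f'(p)|}{8}(1 - |p_n|).$ Dividing the two estimates, $|\eta - f(p_n)| \leq 16\beta\,(1 - |f(p_n)|)$ for all large $n,$ i.e. $f(p_n) \in \Gamma_{16\beta}(\eta)$ eventually; enlarging the opening to absorb the finitely many remaining terms (each with $|\eta - f(p_n)|/(1 - |f(p_n)|) < \infty$ automatically, since $f(p_n) \in \disk$) finishes the proof.

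I expect the only genuinely delicate point to be invoking the \emph{right} package from the Julia--Carath\'eodory circle, rather than just the existence of the unimodular boundary value $\eta.$ One needs \emph{both} that the difference quotient $(\eta - f(z))/(p - z)$ has a nonzero non-tangential limit --- this is what prevents $|\eta - f(p_n)|$ from decaying faster than $|p - p_n|,$ and hence faster than $1 - |p_n|$ --- \emph{and} Julia's inequality, which conversely keeps $1 - |f(p_n)|$ from decaying faster than $1 - |p_n|.$ Each ingredient alone yields only one-sided control; together they force $1 - |f(p_n)| \asymp 1 - |p_n|,$ and the non-tangential convergence of $\{f(p_n)\}$ then drops out of the elementary comparison above.
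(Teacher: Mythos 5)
Your proof is correct and follows essentially the same route as the paper: both arguments reduce the non-tangential convergence of $\{f(p_n)\}$ to the two Julia--Carath\'eodory facts that $|f(p)-f(p_n)|\asymp |f'(p)|\,|p-p_n|$ and $1-|f(p_n)|\gtrsim |f'(p)|\,(1-|p_n|)$ along $\Gamma_\beta(p).$ The only cosmetic difference is that you derive the second estimate from Julia's inequality, whereas the paper quotes directly the Julia--Carath\'eodory statement that $(1-|f(p_n)|)/(1-|p_n|)\to |f'(p)|$ and telescopes the three ratios.
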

    \begin{proof}
        Since $|f'(p)| < \infty$ we have that $f(p) \in \ddisk.$
        Write
        \begin{equation*}
            \frac{1-|f(p_n)|}{|f(p)-f(p_n)|} = \frac{1-|f(p_n)|}{1-|p_n|} \frac{1-|p_n|}{|p-p_n|} \frac{|p-p_n|}{|f(p)-f(p_n)|}.
        \end{equation*}
        Also because $|f'(p)|<\infty$, by Julia-Carathéodory Theorem, the first and third terms converge respectively to $|f'(p)|$ and $|f'(p)|^{-1}$, and therefore
        \begin{equation*}
            \liminf_n \frac{1-|f(p_n)|}{|f(p)-f(p_n)|} = \liminf_n \frac{1-|p_n|}{|p-p_n|} > 0.
        \end{equation*}
    \end{proof}

    Note that the assumption of finite angular derivative is necessary in the above statement, even if we ask the function $f$ to be inner.
    In fact, it can be proved that there exist inner functions mapping a given Stolz angle to a tangential region (see \cite{ref:DonaireRadialBehaviourLittleBloch}).

    We are now ready to prove our main results. 

    \begin{proof}[Proof of Theorem \ref{thm:LownerMuP}]
        We can choose a sequence of points $\{p_n\}$ in $\disk$ approaching $p$ non-tangentially such that
        \begin{equation}
            \label{eq:AngularDerivativeCondition}
            \lim_{n \to \infty} \frac{1-|f(p_n)|^2}{1-|p_n|^2} = |f'(p)| > 0.
        \end{equation}

        By Theorem \ref{thm:LownerLemma}, we have that
        \begin{equation}
            \label{eq:LownerPropertyMuPN}
            \mu_{p_n}(f^{-1}(E)) = \frac{1-|f(p_n)|^2}{1-|p_n|^2} \mu_{f(p_n)}(E).
        \end{equation}
  
        Lemma \ref{lemma:MeasuresConvergence} gives that $\mu_{p_n}(f^{-1}(E)) \rightarrow \mu_p(f^{-1}(E))$ as $n \to \infty.$
        If $|f'(p)| < \infty,$ Lemma \ref{lemma:FiniteDerivativeCones} gives that $f(p_n)$ converges to $f(p)$ non-tangentially.
        Thus, Lemma \ref{lemma:MeasuresConvergence} gives that $\mu_{f(p_n)}(E) \rightarrow \mu_{f(p)}(E)$ as $n \to \infty.$
        Therefore, equations \eqref{eq:AngularDerivativeCondition} and \eqref{eq:LownerPropertyMuPN} give the statement \eqref{stm:LownerMuPFiniteDerivative}.
        Assume now that $|f'(p)| = \infty.$
        If $\mu_{f(p)}(E) = 0,$ we have $\lambda(E) = 0.$
        Hence, by Theorem \ref{thm:LownerLemma}, we have that $\lambda(f^{-1}(E)) = 0$ and it follows that $\mu_p(f^{-1}(E)) = 0.$
        Finally assume $\mu_{f(p)}(E) > 0.$
        Observe that for any $n \geq 1$ we have $\mu_{f(p_n)}(E) > \lambda(E) /4 >0.$
        Thus, since $|f'(p)| = \infty,$ the right-hand side of equation \eqref{eq:LownerPropertyMuPN} tends to infinity and, by Lemma \ref{lemma:MeasuresConvergence}, we deduce that $\mu_p (f^{-1}(E)) = \infty.$
    \end{proof}

    \begin{proof}[Proof of Theorem \ref{thm:ContentsMuP}]
        We will use Theorem \ref{thm:ConformalFernandezPestana} in the following form.
        For $z \in \disk$  we have that
        \begin{equation}
            \label{eq:GeneralisedInternalFP}
            M_\alpha(\mu_z)(f^{-1}(E)) \geq C_\alpha \left(\frac{1-|f(z)|^2}{1-|z|^2}\right)^\alpha M_\alpha(\mu_{f(z)})(E)
        \end{equation}
        for any Borel set $E \subseteq \ddisk.$ We can choose a sequence of points $\{p_n\}$ in $\disk$ approaching $p$ non-tangentially such that
        \begin{equation}
            \label{eq:AngularDerivativeCondition2}
            \lim_{n \to \infty} \frac{1-|f(p_n)|^2}{1-|p_n|^2} = |f'(p)| > 0.
        \end{equation}
        Assume $|f'(p)|< \infty.$
        Applying Lemma \ref{lemma:ContentsNTConvergence} and equation \eqref{eq:GeneralisedInternalFP}, we get
        \begin{equation*}
            \begin{split}
                M_{\alpha}(\mu_p)(f^{-1}(E)) &= \lim_{r\to 1} M_{\alpha}(\mu_{p_n})(f^{-1}(E))\\
                &\geq \limsup_{n\to \infty} C_{\alpha} \left(\frac{1-|f(p_n)|^2}{1-|p_n|^2}\right)^{\alpha} M_{\alpha}(\mu_{f(p_n)})(E)\\
                &= C_{\alpha} |f'(p)|^{\alpha} \limsup_{n\to \infty} M_{\alpha}(\mu_{f(p_n)})(E).
            \end{split}
        \end{equation*}
        By Lemma \ref{lemma:FiniteDerivativeCones}, $f(p_n)$ tends to $f(p)$ non-tangentially as $n \to \infty$ and hence, Lemma \ref{lemma:ContentsNTConvergence} gives that
        \begin{equation*}
            \lim_{n \to \infty} M_{\alpha}(\mu_{f(p_n)})(E) =  M_{\alpha}(\mu_{f(p)})(E),
        \end{equation*}
        which finishes the proof of part \eqref{stm:ContentsMuPFiniteDerivative}.
        Assume now $|f'(p)| =  \infty.$
        We can assume $f(p) \notin E.$
        Since $M_{\alpha}(\mu_{f(p)})(E) > 0,$ there exists an arc $I$ centred at $f(p)$ such that $M_{\alpha}(\mu_{f(p)})(E \setminus I) > 0.$
        Write $E^* = E \setminus I.$
        Then there exists $n_0 >0$ such that $M_{\alpha}(\mu_{f(p_n)})(E^*) > M_{\alpha}(\mu_{f(p)})(E^*) /2$ if $n > n_0.$
        Now,
        \begin{equation*}
            \begin{split}
                M_{\alpha}(\mu_p)(f^{-1}(E^*)) &= \lim_{n\to \infty} M_{\alpha}(\mu_{p_n})(f^{-1}(E^*))\\
                &\geq C_\alpha \limsup_{n\to \infty} \left(\frac{1-|f(p_n)|^2}{1-|p_n|^2}\right)^{\alpha} M_{\alpha}(\mu_{f(p_n)})(E^*) = \infty.
            \end{split}
        \end{equation*}
        Hence $M_{\alpha}(\mu_p)(f^{-1}(E)) = \infty.$
    \end{proof}
    
    \section{Applications}
    \label{sec:Applications}
    \subsection{Omitted values}
    A classical result by Frostman says that any inner function $f$ can omit at most a set of logarithmic capacity zero, that is, $\disk \setminus f(\disk)$ has logarithmic capacity zero (see Chapter II of \cite{ref:GarnettBoundedAnalyticFunctions}).
    Conversely, given a relatively compact set $K$ of the unit disc of logarithmic capacity zero, the universal covering map $f\colon \disk \rightarrow \disk \setminus K$ is an inner function (see page 323 of \cite{ref:TsujiPotentialTheory}).
    Given a set $E \subseteq \disk,$ its \emph{non-tangential closure} on $\ddisk$, denoted by $E^{NT},$ is the set of points $\xi \in \ddisk$ for which  there exists a sequence $\{z_n\} \subseteq E$ such that $z_n \to \xi$ non-tangentially. We first state an auxiliary result which may have independent interest. 
    
    \begin{lemma}
        \label{lemma:inclusion}
        Let $f\colon \disk \rightarrow \disk$ be an inner function and let $E = \disk \setminus f(\disk)$ be the set of its omitted points. Then 
        \begin{equation*}
            f^{-1}(E^{NT}) \subseteq \{\xi \in \ddisk\colon |f'(\xi)| = \infty\}.
        \end{equation*}
    \end{lemma}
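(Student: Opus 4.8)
The plan is to argue by contradiction. Suppose that $\xi\in f^{-1}(E^{NT})$ while $|f'(\xi)|<\infty$. By the definition of finite angular derivative, $q:=f(\xi)$ then lies on $\ddisk$, and since $q\in E^{NT}$ there is a sequence $\{w_n\}\subseteq E=\disk\setminus f(\disk)$ converging to $q$ non-tangentially, say $\{w_n\}\subseteq\Gamma_{\beta_0}(q)$ for some $\beta_0>0$. The heart of the matter is the following claim: if $f$ has a finite angular derivative at $\xi$, then for every $\beta>0$ there exists $\delta>0$ such that $\Gamma_\beta(q)\cap\{z\colon|z-q|<\delta\}\subseteq f(\disk)$. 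Applying this with $\beta=\beta_0$, the points $w_n$ eventually belong to $f(\disk)$, contradicting $w_n\in E$; hence $|f'(\xi)|=\infty$, which is the assertion of the lemma. (We remark that, up to this claim, the argument uses only that $f$ is a holomorphic self map of $\disk$.)

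To prove the claim I would pass to the upper half plane. Let $C,C'\colon\disk\to\halfpl$ be conformal maps with $C(\xi)=\infty$ and $C'(q)=\infty$, and set $F=C'\circ f\circ C^{-1}\colon\halfpl\to\halfpl$. Under these conjugations the hypothesis $|f'(\xi)|<\infty$ becomes the statement that $F(w)/w\to\lambda$ non-tangentially as $w\to\infty$ for some $\lambda\in(0,\infty)$; Stolz angles with vertex $q$ correspond to Stolz angles with vertex $\infty$; and the claim translates into: for every Stolz angle $S$ at $\infty$ there is $R>0$ with $S\cap\{w\colon|w|>R\}\subseteq F(\halfpl)$. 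By the Julia-Carath\'eodory theorem (which is conformally invariant), $F'(w)\to\lambda$ non-tangentially as $w\to\infty$; writing $F(w)=\lambda w+b(w)$ this means $b'(w)\to0$ non-tangentially, and integrating $b'$ along suitable paths one obtains $b(w)/w\to0$ as $w\to\infty$, uniformly on each Stolz angle at $\infty$.

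It then remains to prove that $F(\halfpl)\supseteq S\cap\{|w|>R\}$ for a given Stolz angle $S$ at $\infty$ and some $R=R(S)$. Fix $W\in S$ with $|W|$ large. Since $W/\lambda$ again lies in $S$, it is at distance $\asymp|W|$ from $\partial\halfpl$, so one may choose a disc $D$ centred at $W/\lambda$ of radius $c|W|$, with $c=c(S)>0$ small, contained in a slightly wider Stolz angle at $\infty$ (hence in $\halfpl$) on which $b(w)/w\to0$ uniformly. On $\partial D$ one has $|\lambda w-W|=\lambda c|W|$, whereas $|(F(w)-W)-(\lambda w-W)|=|b(w)|=o(|W|)$ by the uniform decay; hence for $|W|\ge R(S)$ the strict inequality $|b(w)|<|\lambda w-W|$ holds throughout $\partial D$, and Rouch\'e's theorem yields a zero of $F(w)-W$ in $D$, that is $W\in F(D)\subseteq F(\halfpl)$. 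Undoing the conformal changes of variable gives the claim. The step I expect to be the main obstacle is precisely this Rouch\'e argument: one must control the size of $b$ on discs lying inside slightly fattened Stolz angles and make the uniformity of $b(w)/w\to0$ there explicit, so that the comparison inequality stays strict on all of $\partial D$ for every sufficiently distant $W$.
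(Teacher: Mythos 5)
Your argument is correct and is essentially the paper's proof: both reduce the lemma to the claim that a finite angular derivative at $\xi$ forces $f(\disk)$ to contain a truncated Stolz angle at $f(\xi)$, and both establish this by comparing $f$ with its linear approximation via Rouch\'e's theorem. The only differences are cosmetic — you conjugate to the upper half plane and use a disc of radius comparable to $|W|$ as the Rouch\'e contour, while the paper stays in $\disk$ and uses the boundary of a truncated cone $\Gamma_{\beta,r}(\xi)$ — and your disc contour in fact sidesteps the need to check the comparison inequality near the vertex of the cone.
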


    \begin{proof}
        Consider a point $\xi \in \ddisk$ such that the angular derivative of $f$ at $\xi$ exists and it is finite, and let $\zeta = f(\xi).$
        In other words assume that
        \begin{equation}
            \label{eq:FiniteAngularDerivative}
            \lim_{\Gamma_\beta(\xi) \ni z \rightarrow \xi } \frac{\zeta - f(z)}{\xi-z} = A
        \end{equation}
        is finite.
        We want to see that, in this situation, for any opening $\gamma > 1,$ there is $0 < s = s(\gamma) < 1$ such that the truncated cone
        \begin{equation*}
            \Gamma_{\gamma,s}(\zeta) = \left\{w \in \disk\colon |\zeta-w| < \gamma (1-|w|), |\zeta-w| < s \right\}
        \end{equation*}
        does not intersect $E,$ that is, $\Gamma_{\gamma,s}(\zeta) \subseteq f(\disk).$
        So fix $\gamma > 1$ and consider $\Gamma_{\gamma,s}(\zeta)$ with $0 < s < 1$ to be determined.
        Fix $w_0 \in \Gamma_{\gamma,s}(\zeta).$
        We want to see that there is $z_0 \in \disk$ such that $f(z_0) = w_0.$
        By equation \eqref{eq:FiniteAngularDerivative}, we can express
        \begin{equation*}
            f(z) = \zeta + A (z-\xi) + o(|z-\xi|),
        \end{equation*}
        where $o(|z-\xi|)/|z-\xi| \rightarrow 0$ as $z \to \xi$ non-tangentially.
        Consider $\Gamma_{\beta,r}(\xi)$ with $\beta > 2\gamma$ and $0 < r < 1$ to be determined.
        Observe that there exists $0 < r_0 < 1$ such that, if $r < r_0$ and $0< s < |A|r/2,$ then for any $z \in \partial\Gamma_{\beta,r}(\xi)$ we have that
        \begin{equation*}
            \left|(f(z)-w_0) - (\zeta+A(z-\xi)-w_0)\right| < |\zeta+A(z-\xi)-w_0|.
        \end{equation*}
        Thus, by Rouché's Theorem, the functions $f(z)-w_0$ and $g(z)-w_0 = \zeta+A(z-\xi)-w_0$ have the same number of zeroes in $\Gamma_{\beta,r}(\xi).$
        But $g(z)$ is a degree $1$ polynomial and $g(\Gamma_{\beta,r}(\xi)) = \Gamma_{\beta,|A|r}(\zeta) \supseteq \Gamma_{\gamma,s}(\zeta),$ and thus $g(z)-w_0$ has a single zero on $\Gamma_{\beta,r}(\xi).$
        Therefore, there is $z_0 \in \Gamma_{\beta,r}(\xi)$ such that $f(z_0) = w_0,$ which completes the proof.
    \end{proof}
    
    As an application of Theorem \ref{thm:LownerMuP} and Lemma \ref{lemma:inclusion}, we have the following result.
    \begin{corl}
        Let $f\colon \disk \rightarrow \disk$ be an inner function and let $E = \disk \setminus f(\disk)$ be the set of its omitted points.
        Let $p$ be a boundary Fatou point of $f$.
        
        \begin{enumerate}[(a)]
            \item
            Assume $|f'(p)| < \infty.$
            Then for any $0 < \alpha < 1$ there exists a constant $C_\alpha > 0,$ independent of $f,$ such that
            \begin{equation}
                \label{eq:InfiniteDerivativeContent}
                M_\alpha(\mu_p)\left(\{\xi \in \ddisk\colon |f'(\xi)| = \infty\}\right) \geq C_\alpha |f'(p)|^\alpha M_\alpha(\mu_{f(p)})(E^{NT}).
            \end{equation}
            
            \item
            Assume $|f'(p)| = \infty.$
            Then  $M_\alpha(\mu_p)\left(\{\xi \in \ddisk\colon |f'(\xi)| = \infty\}\right) = \infty$ whenever $M_\alpha(\mu_{f(p)})(E^{NT}) > 0.$
        \end{enumerate}
    \end{corl}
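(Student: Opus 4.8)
The plan is to read this corollary off directly from Lemma \ref{lemma:inclusion} and Theorem \ref{thm:ContentsMuP}, using only the (trivial) monotonicity of the content $M_\alpha(\mu_p)$; no new ideas are involved.

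First I would record that $M_\alpha(\mu_p)$ is monotone under inclusion: if $A\subseteq B\subseteq\ddisk$ then every covering of $B\setminus\{p\}$ by open arcs also covers $A\setminus\{p\}$, so $M_\alpha(\mu_p)(A)\le M_\alpha(\mu_p)(B)$. Next, since $f$ is non-constant, $f(\disk)$ is open, hence $E=\disk\setminus f(\disk)$ is relatively closed in $\disk$, and its non-tangential closure $E^{NT}$ is a Borel subset of $\ddisk$; consequently $f^{-1}(E^{NT})$ is Borel and Theorem \ref{thm:ContentsMuP} applies to it. Now Lemma \ref{lemma:inclusion} gives $f^{-1}(E^{NT})\subseteq\{\xi\in\ddisk\colon|f'(\xi)|=\infty\}$, so monotonicity yields
\[
    M_\alpha(\mu_p)\big(\{\xi\in\ddisk\colon|f'(\xi)|=\infty\}\big)\ \geq\ M_\alpha(\mu_p)\big(f^{-1}(E^{NT})\big).
\]
For part (a), with $|f'(p)|<\infty$, Theorem \ref{thm:ContentsMuP}(\ref{stm:ContentsMuPFiniteDerivative}) applied to the Borel set $E^{NT}$ bounds the right-hand side from below by $C_\alpha|f'(p)|^\alpha M_\alpha(\mu_{f(p)})(E^{NT})$, which is exactly \eqref{eq:InfiniteDerivativeContent}. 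For part (b), with $|f'(p)|=\infty$ and $M_\alpha(\mu_{f(p)})(E^{NT})>0$, Theorem \ref{thm:ContentsMuP}(b) gives $M_\alpha(\mu_p)(f^{-1}(E^{NT}))=\infty$, and the displayed inequality then forces $M_\alpha(\mu_p)(\{\xi\in\ddisk\colon|f'(\xi)|=\infty\})=\infty$.

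Beyond invoking Lemma \ref{lemma:inclusion}, there is no real obstacle; the only point deserving a word is that $E^{NT}$ is Borel, so that Theorem \ref{thm:ContentsMuP} applies verbatim to $f^{-1}(E^{NT})$. This is standard: writing $F_\beta=\{\xi\in\ddisk\colon\xi\in\overline{E\cap\Gamma_\beta(\xi)}\}$, one has $E^{NT}=\bigcup_{\beta\in\mathbb{N}}F_\beta$, and each $F_\beta=\bigcap_{n\in\mathbb{N}}\{\xi\in\ddisk\colon\exists\,z\in E\ \text{with}\ |z-\xi|<\beta(1-|z|)\ \text{and}\ |z-\xi|<1/n\}$ is a countable intersection of open subsets of $\ddisk$ (each of the sets intersected being a union of open arcs centred at points of $E$), hence Borel. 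Recalling that the boundary function of $f$ is Borel measurable, $f^{-1}(E^{NT})$ is then a Borel set, which is all that was used above.
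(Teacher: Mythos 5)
Your proof is correct and is exactly the argument the paper intends: the paper gives no explicit proof, merely noting that the corollary follows from Lemma \ref{lemma:inclusion} together with the distortion theorem for contents (the paper cites Theorem \ref{thm:LownerMuP}, evidently a slip for Theorem \ref{thm:ContentsMuP}, which is what you correctly invoke). Your added verification that $E^{NT}$ is Borel is a harmless and sound bonus.
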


    \subsection{Inner functions in the upper half plane}
    Let $\halfpl = \{w \in \mathbb{C}\colon \Im(w) > 0 \}$ be the upper half plane.
    A holomorphic mapping $g\colon \halfpl \rightarrow \halfpl$ is an \emph{inner function} of the upper half plane if $\lim_{y \to 0} g(x + iy) \in \reals$ for a.e. $x \in \reals.$
    This natural definition agrees with conformal changes of coordinates: given $p \in \ddisk$ denote by  $w_p$ the Möbius transformation mapping $\disk$ onto $\halfpl,$ the point $p$ to $\infty$ and, say, the origin to $i/2.$
    Then, $g$ is an inner function of the upper half plane if and only if $f = w_p^{-1} \circ g \circ w_p$ is an inner function of the unit disc $\disk.$
    Observe that $g(\infty) = \lim_{t \to +\infty} g(it) = \infty$ if and only if $f(p) = p.$
    A holomorphic mapping $g$ from $\halfpl$ into $\halfpl$ has a finite angular derivative at $\infty$ if 
    \begin{equation*}
        g'(\infty) = \lim_{t \to +\infty} \frac{it}{g(it)}
    \end{equation*}
    exists and is finite.
    Otherwise, we write $|g'(\infty)| = \infty.$
    Observe that $g$ has a finite angular derivative at infinity if and only if $f = w_p^{-1} \circ g \circ w_p$ has a finite angular derivative at $p.$
    Moreover, the identity $|g'(\infty)| = |f'(p)|$ holds in the sense that both quantities coincide when they are finite, and if one of them is infinite so is the other.
    This fact easily follows from the identity
    \begin{equation*}
        \frac{w}{g(w)} = \frac{p + z}{p + f(z)}\frac{p - f(z)}{p - z}.
    \end{equation*}
    Let $|A|$ denote the Lebesgue measure of a measurable set $A \subseteq \reals$ and, for $0 < \alpha < 1,$ let $M_\alpha (A)$ denote its $\alpha$-Hausdorff content.
    We now state the versions of \eqref{thm:LownerMuP} and \eqref{thm:ContentsMuP} in this setting.  

    \begin{corl}
        \label{cor:LownerMuP}
        Let $g\colon \halfpl \to \halfpl$ be an inner function and assume that  $g(\infty) = \infty.$
        
        \begin{enumerate}[(a)]
            \item
            \label{stm:LownerMuPFiniteDerivativeUpperHalfPlane}
            Assume $|g'(\infty)| < \infty.$
            Then
            \begin{equation}
                \label{first}
                |g^{-1}(A)| = |g'(\infty)| |A|
            \end{equation}
            for any measurable set $A \subseteq \reals.$
            Moreover, for any $0 < \alpha < 1$ there exists a constant $C_\alpha > 0,$ independent of $g,$ such that
            \begin{equation}
                \label{formulacontent}
                M_{\alpha} (g^{-1}(A)) \geq C_{\alpha} |g'(\infty)|^{\alpha} M_{\alpha}(A)
            \end{equation}
            for any Borel set $A \subseteq \reals.$
            
            \item
            \label{stm:LownerMuPInfiniteDerivativeUpperHalfPlane}
            If $|g'(\infty)| = \infty$ and $A \subseteq \reals$ is a measurable set, then $|g^{-1}(A)| = \infty$ if $|A| > 0$ and  $|g^{-1}(A)| = 0$ if $|A|= 0.$
            Moreover, $M_{\alpha}(g^{-1}(A)) = \infty$ for any Borel set $A \subseteq \reals$  such that  $M_{\alpha}(A) > 0.$
        \end{enumerate}
    \end{corl}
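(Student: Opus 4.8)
The plan is to reduce the statement to Theorems~\ref{thm:LownerMuP} and~\ref{thm:ContentsMuP} by conjugating $g$ to the unit disc. Fix any $p\in\ddisk$ and set $f=w_p^{-1}\circ g\circ w_p$. By the equivalence recalled above, $f$ is an inner function of $\disk$, and since $g(\infty)=\infty$ we have $f(p)=p$; in particular $p$ is a boundary Fatou point of $f$ with $f(p)=p$, and $|f'(p)|=|g'(\infty)|$. Given $A\subseteq\reals$, put $E=w_p^{-1}(A)\subseteq\ddisk\setminus\{p\}$. Since $w_p$ is a Möbius transformation it preserves non-tangential limits, so the conjugacy relation gives $w_p\bigl(f^{-1}(E)\bigr)=g^{-1}(A)$, the preimages being understood via the respective boundary-value maps.

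Two elementary facts carry the disc statements over to the half plane. The first is $\mu_p(E)=|w_p(E)|$ for every measurable $E\subseteq\ddisk$, which is exactly the interpretation of $\mu_p$ recorded in the introduction (with $\omega_p=w_p$). The second is
\[
    M_\alpha(\mu_p)(E)=M_\alpha\bigl(w_p(E)\bigr)\qquad\text{for every Borel }E\subseteq\ddisk ,
\]
where on the right $M_\alpha$ is the usual $\alpha$-Hausdorff content on $\reals$. To check the second, I would observe that $w_p$ maps the open arcs $I$ of $\ddisk$ with $p\notin\overline{I}$ bijectively onto the bounded open intervals of $\reals$, with $\mu_p(I)=|w_p(I)|$ by the first fact; that any open arc whose closure contains $p$ has infinite $\mu_p$-measure and hence is useless for the infimum defining $M_\alpha(\mu_p)$; and that covers of $E\setminus\{p\}$ by $p$-avoiding arcs correspond under $w_p$ to covers of $w_p(E)$ by bounded open intervals, and conversely. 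In the degenerate cases ($A$ unbounded, or $p\in\overline{E}$ with infinite content) both sides equal $+\infty$.

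Granting these, the corollary becomes a substitution. For part~(a), assume $|g'(\infty)|=|f'(p)|<\infty$. Part~(a) of Theorem~\ref{thm:LownerMuP}, together with $f(p)=p$ and the first fact, gives
\[
    |g^{-1}(A)|=\mu_p\bigl(f^{-1}(E)\bigr)=|f'(p)|\,\mu_p(E)=|g'(\infty)|\,|A| ,
\]
which is~\eqref{first}, and part~(a) of Theorem~\ref{thm:ContentsMuP} with the second fact gives
\[
    M_\alpha\bigl(g^{-1}(A)\bigr)=M_\alpha(\mu_p)\bigl(f^{-1}(E)\bigr)\ge C_\alpha|f'(p)|^\alpha M_\alpha(\mu_p)(E)=C_\alpha|g'(\infty)|^\alpha M_\alpha(A) ,
\]
which is~\eqref{formulacontent}. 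For part~(b), $|g'(\infty)|=\infty$ forces $|f'(p)|=\infty$; since $|A|=0\iff\mu_p(E)=0$ and $M_\alpha(A)>0\iff M_\alpha(\mu_p)(E)>0$ by the two facts, the assertions about $|g^{-1}(A)|$ and about $M_\alpha(g^{-1}(A))$ follow at once from part~(b) of Theorem~\ref{thm:LownerMuP} and part~(b) of Theorem~\ref{thm:ContentsMuP}, again using $f(p)=p$.

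The only step that is not purely formal is the second fact above, namely that transporting arc-covers of $\ddisk$ to interval-covers of $\reals$ under $w_p$ leaves the Hausdorff content unchanged; one has to make sure that arcs through $p$ can be discarded and that the unbounded configurations are consistent on both sides. I expect this (minor) point to be the only obstacle; everything else is a direct translation through the conformal map.
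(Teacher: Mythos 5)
Your proposal is correct and follows essentially the same route as the paper: conjugate by $w_p$, use the identities $|A|=\mu_p(w_p^{-1}(A))$ and $M_\alpha(\mu_p)(E)=M_\alpha(w_p(E))$, and substitute into Theorems \ref{thm:LownerMuP} and \ref{thm:ContentsMuP} with $f(p)=p$. The only difference is that you spell out the verification of the content identity (discarding arcs whose closure contains $p$, matching arc-covers with interval-covers), which the paper simply asserts as a consequence of \eqref{leb r} and $w_p$ being Möbius.
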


    \begin{proof}
        Note that for any measurable set $A \subseteq \reals$ we have \begin{equation}
            \label{leb r}
            |A| = \mu_p(w_p^{-1}(A)), \qquad p \in \ddisk.
        \end{equation}
        Hence, $|g^{-1}(A)| = \mu_p(w_p^{-1}(g^{-1}(A))) = \mu_p(f^{-1} (w_p^{-1}(A))).$
        Applying Theorem \ref{thm:LownerMuP} and \eqref{leb r} we deduce $|g^{-1}(A)| = |f'(p)| \mu_p(w_p^{-1}(A)) = |g'(\infty)| |A|$ which is \eqref{first}.
        It follows from \eqref{leb r} and $w_p$ being a Möbius map that 
        \begin{equation}
            \label{rel contents}
            M_\alpha(\mu_p)(E) = M_\alpha(w_p(E)), \quad E \subseteq \ddisk.
        \end{equation}
        Thus, the previous argument shows that \eqref{formulacontent} holds.
        Part \eqref{stm:LownerMuPInfiniteDerivativeUpperHalfPlane} follows from similar considerations.
    \end{proof}
    
    \printbibliography

    \Addresses

\end{document}